\tikzset{vertex/.style={circle,draw,fill,inner sep=0pt,minimum size=1mm}}
\tikzset{oriented edge/.style={thick, postaction={decorate,
         decoration={markings,mark=at position .5 with \arrow{angle 90};}}}}
\newtheorem{thm}{Theorem}
\newtheorem{lem}[thm]{Lemma}
\theoremstyle{definition}
\newtheorem{exl}[thm]{Example}
\newtheorem{defn}[thm]{Definition}
\newtheorem{rem}[thm]{Remark}
\newcommand{\lift}{\widetilde}
\newcommand{\llangle}{\langle \langle}
\newcommand{\rrangle}{\rangle \rangle}
\newcommand{\Z}{\mathbb Z}
\newcommand{\Q}{\mathbb Q}
\DeclareMathOperator{\Fix}{Fix}
\begin{document}

\title [Generalizing the rotation interval to vertex maps on graphs] {Generalizing the rotation interval to vertex maps on graphs}

\author[Bernhardt]{Chris Bernhardt}

\address{Department of mathematics and computer science\\Fairfield
University\\Fairfield\\CT 06824}
\email{cbernhardt@fairfield.edu}

 \author[Staecker]{P. Christopher Staecker}
 \address{Department of mathematics and computer science\\Fairfield
University\\Fairfield\\CT 06824}
\email{cstaecker@fairfield.edu}

\subjclass[2000]{37E15, 37E25, 37E45}

\keywords{rotation number, rotation interval, graphs, vertex maps, periodic orbits, homotopic to constant map, $\Q$-group}

\begin{abstract}
Graph maps that are homotopic to the identity and that permute the vertices are studied. Given a periodic point for such a map, a {\em rotation element} is defined in terms of the fundamental group. A number of results are proved about the rotation elements associated to periodic points in a given edge of the graph. Most of the results show that the existence of two periodic points with certain rotation elements will imply an infinite family of other periodic points with related rotation elements. These results for periodic points can be considered as generalizations of the rotation interval for degree one maps of the circle.
\end{abstract}

\maketitle

\section{Introduction}
Poincar\'e introduced the rotation number for a homeomorphism of the circle while studying the precession of the perihelion of planetary orbits \cite{P}. This idea was extended to degree one maps of the circle by Newhouse, Palis and Takens \cite{NPT}. The ideas have since been extended further to maps, homotopic to the identity,  of the annulus and torus, for example see \cite{F, L, M}. For a good history of the ideas and generalizations see \cite{ALM}. Most of the generalizations used homological ideas rather than non-commutative homotopical ones, but an approach using homotopy is given in \cite{BM}.

In this paper we consider rotations of periodic points for maps on graphs that are homotopic to the identity. First consider maps of the circle. Let $a$ be the generator of the fundamental group of $S^1$. Suppose $f: S^1 \to S^1$ is a degree one map (i.e. a map homotopic to the identity) that has a periodic point $x$ with period $n$. The homotopy gives a path from $x$ to $f^n(x)$. This path is a closed loop and so must correspond to wrapping around the circle a certain number, say $m$, times. Thus this path can be associated to $a^m \in \pi_1$. The rotation number associated to $x$ is the average rotation around the circle $m/n$. 

In the more general case where the underlying graph is not topologically a circle, the same construction can be performed. The fundamental group of the graph will be a free group on a number of generators. If $x$ is a periodic point of $f$ with period $n$ and $f$ is homotopic to the identity then the homotopy will give a path from $x$ to $f^n(x)$ that corresponds to an element $w \in \pi_1$. We define the {\em rotation element}, $\rho(x)$, to be $w^{1/n}$. So for maps of the circle the rotation number is just the exponent of the rotation element.

In this paper the maps $f$ from a graph $G$ to itself are taken to be homotopic to the identity. They will also be assumed to belong to the class of maps that permute the vertices. These maps have been studied in \cite{B, B2, B3} where they are called {\em vertex maps}. 

In \cite{B} rotation matrices for vertex maps homotopic to the identity were defined. These matrices give homological information. When the vertices of the graph form one periodic orbit, it is shown in \cite{B} that the rotation matrix is the zero matrix. This tells us that the matrix cannot distinguish any point whose rotation element is different from the rotation element of the vertices. In this situation it is clear that our approach using homotopy gives more information than one using homology. This is explained more fully in the last section.

The general spirit of our results is to generalize the following fact about rotation numbers on the circle: For degree one maps of the circle, if there are points with rational rotation numbers $r, q$, then for any $p$ with $r \le p \le q$, there is a point with rotation number $p$. Our main results will always have this flavor, showing that the existence of two periodic points with certain rotation elements will imply infinitely many additional periodic points with rotation elements related to the original two.

The outline of the paper is as follows: In Section \ref{prelimsection} we describe a labeling scheme for the universal cover, and define the rotation element in terms of a lift. As in the circle case, different liftings of the map $f$ give different rotation elements. The effect of changing the lifting is described. After the preliminary definitions we look at the cases where we are given periodic points $x$ and $y$ with rotation elements $\rho(x)$ and $\rho(y)$ and ask what other periodic points and rotation elements the underlying map must have. In Section \ref{interiorsection} we consider the case when $x$ and $y$ are in the interior of an edge. Then in Sections \ref{fpsection} and \ref{belongsection} we consider the case when $x$ and $y$ are vertices of an edge.


\section{Preliminaries and definition of the rotation element}\label{prelimsection}
Throughout, $G$ will be a finite directed graph with some vertex $x_0 \in G$ chosen as a base point for the fundamental group $\pi = \pi_1(G,x_0)$. In our graphs we will allow multiple edges  between any two vertices, but for simplicity we do not allow looped edges.

Let $\lift G$ be the universal covering space of $G$, which is an infinite graph. We will view elements of $\pi$ interchangeably as loop classes in $G$ as well as covering transformations on $\lift G$.
We label the vertices of $\lift G$ by choosing one particular lifted copy $\lift V_i$ of each vertex $V_i\in G$, and denoting the other vertices of $\lift G$ by covering transformations applied to the various $\lift V_i$. We will label the edges of $\lift G$ similarly: choose a particular lifting $\lift E_j$ for each edge $E_j$. All other edges of $\lift G$ projecting on to $E_j$ will be denoted $g \lift E_j$ for various $g \in \pi$, where the group element $g$ is viewed as a covering transformation on $\lift G$.

With the above choices, there is a specific way to lift any point $x\in G$ to some $\lift x \in \lift G$. If $x$ is a vertex we have already chosen its lift. If $x$ is on the interior of some edge $E$, then we choose $\lift x$ to be the single point in $p^{-1}(x) \cap \lift E$, where $\lift E$ is the chosen lift of the edge $E$. Throughout the paper we will use the tilde to indicate these specific choices of lifted points and edges. We will see that alternative choices for these lifts will not meaningfully affect the concepts defined below.

One major convenience of the above labeling scheme is that when $\gamma \in \pi$, we will have $\lift f(\gamma \lift x) = f_\#(\gamma)\lift f(\lift x)$, where $\lift f_\#:\pi \to \pi$ is the induced homomorphism on the fundamental group. When $f$ is homotopic to the identity, as will always be the case for the later sections, this means that $\lift f(\gamma \lift x) = \gamma \lift f(\lift x)$. 

\begin{rem}\label{piordering}
Standard notation for group elements and function composition leads to some unavoidable confusion when we consider fundamental group elements as deck transformations. As usual, fundamental group elements are read from left to right, so the element $ab\in \pi$ represents the loop $a$ followed by the loop $b$. Thus for $a,b\in \pi$, the point denoted $ab \, \lift x$ must be written as $b ( a (\lift x))$ when we view $a$ and $b$ as covering transformations. 
\end{rem}

\subsection{Definition of the rotation element}

\begin{defn}\label{roteltdefn}
Let $x \in \Fix(f^n)$. This means there is some word $w\in \pi$ with $\lift f^n(\lift x) = w\lift x$. We define the \emph{rotation element} of $x$ (with respect to the lift $\lift f$) to be $\rho(x) = w^{1/n}$. 
\end{defn}

The rotation element $\rho(x)$ of a point will be some rational power of an element of $\pi$. When $\pi$ is a free group, $\rho(x)$ is an element of a ``free group with rational exponents'' known as a \emph{free $\Q$-group}. These groups are the completion of free groups with respect to roots, first constructed by Baumslag (then called free $\mathcal D$-groups) in \cite{baum65}, and are still actively studied today. We will not require much sophistication in our use of these $\Q$-groups, since our rotation elements always consist of a single element of $\pi$ to some rational power. (We never consider elements of the form $w^{1/n}v^{1/m}$.)

The exponents are treated formally, modulo the property that $(w^k)^r = w^{kr}$ for an integer $k$ and rational $r$. The following lemma shows that, using this exponentiation, the rotation element is well defined with respect to the specific choice of $n$, which may or may not be the minimal period of $x$.

\begin{lem}
Let $x \in \Fix(f^n)$ and $k\in \Z$. Then the rotation element $\rho(x)$ is independent of whether we view $x$ as a periodic point of period $n$ or of period $kn$.
\end{lem}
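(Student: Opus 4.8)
The plan is to reduce the statement to a single computation in $\lift G$: if $\lift f^n(\lift x) = w\lift x$ with $w \in \pi$, then $\lift f^{kn}(\lift x) = w^k \lift x$, after which the claim is immediate from the formal exponent rule $(w^k)^r = w^{kr}$. Since a period is a positive integer, the content of the lemma is the case $k \geq 1$, and I would argue by induction on $k$, the case $k = 1$ being the defining identity $\lift f^n(\lift x) = w\lift x$.

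For the inductive step I would invoke the commutation property $\lift f(\gamma \lift x) = \gamma\, \lift f(\lift x)$, which holds because $f$ is homotopic to the identity and iterates to $\lift f^m(\gamma \lift x) = \gamma\, \lift f^m(\lift x)$ for every $m \geq 0$ and $\gamma \in \pi$. Taking $\gamma = w^{k-1}$ and $m = n$, and using the inductive hypothesis $\lift f^{(k-1)n}(\lift x) = w^{k-1}\lift x$, one computes
\[
\lift f^{kn}(\lift x) = \lift f^{n}\bigl(\lift f^{(k-1)n}(\lift x)\bigr) = \lift f^{n}\bigl(w^{k-1}\lift x\bigr) = w^{k-1}\,\lift f^{n}(\lift x) = w^{k-1}w\,\lift x = w^k \lift x .
\]
Since $\lift G$ is simply connected, the group element carrying $\lift x$ to $\lift f^{kn}(\lift x)$ is unique, so this shows it is exactly $w^k$.

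Consequently $\rho(x)$, computed while regarding $x$ as a periodic point of period $kn$, equals $(w^k)^{1/(kn)}$, and the exponent convention gives $(w^k)^{1/(kn)} = w^{k/(kn)} = w^{1/n}$, which is $\rho(x)$ computed with period $n$. I do not anticipate a genuine obstacle here; the only points requiring care are the restriction to positive multiples of the period and a correct reading of the left-to-right conventions of Remark \ref{piordering} when composing covering transformations, although since we only ever iterate the single element $w$ the direction of concatenation is irrelevant.
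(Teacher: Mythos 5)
Your proof is correct and follows the same route as the paper's: both reduce the claim to the identity $\lift f^{kn}(\lift x) = w^k\lift x$ and then apply the formal exponent rule $(w^k)^{1/(kn)} = w^{1/n}$. You simply make explicit the inductive step and the commutation property $\lift f(\gamma\lift x)=\gamma\lift f(\lift x)$ that the paper compresses into a single ``Thus,'' which is a reasonable amount of added care.
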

\begin{proof}
Viewing $x\in \Fix(f^n)$, say that $\rho(x)=w^{1/n}$. This means that $\lift f^n(\lift x) = w\lift x$. Thus we have $\lift f^{kn}(\lift x) = w^k \lift x$, and so viewing $x\in \Fix(f^{kn})$ the rotation element is $(w^k)^{1/kn} = w^{1/n}$ as desired.
\end{proof}

Definition \ref{roteltdefn} implicitly uses a specific choice of the point $\lift x \in p^{-1}(x)$ as well as a specific lifting $\lift f$ of the map $f$. Alternative choices will change the rotation element, but in predictable ways, as the following lemmas show. In the first lemma we show the rotation element is independent up to conjugacy of the choice of lifted point. By ``up to conjugacy'' we mean that the rotation elements in question can be written as $w^r$ and $v^r$, where $w$ and $v$ are conjugate in $\pi$.

\begin{lem}\label{pointindep}
When $f$ is homotopic to the identity, the rotation element is independent of the choice of the lifted point $\lift x\in p^{-1}(x)$ up to conjugacy.
\end{lem}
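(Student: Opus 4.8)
The plan is to track how the rotation element changes when we replace the chosen lift $\lift x$ by another preimage of $x$. Since the deck group $\pi$ acts transitively on the fiber $p^{-1}(x)$, any other lift of $x$ has the form $\gamma\lift x$ for some $\gamma\in\pi$. So it suffices to compute $\lift f^n(\gamma\lift x)$ in terms of $w$, where $w\in\pi$ is the word with $\lift f^n(\lift x)=w\lift x$ witnessing $\rho(x)=w^{1/n}$.

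First I would use the equivariance property highlighted in the preliminaries: because $f$ is homotopic to the identity, $\lift f(\gamma\lift y)=\gamma\lift f(\lift y)$ for all $\gamma\in\pi$ and all $\lift y\in\lift G$. Iterating this $n$ times gives $\lift f^n(\gamma\lift x)=\gamma\,\lift f^n(\lift x)=\gamma w\lift x$. Now I need to express $\gamma w\lift x$ as $w'(\gamma\lift x)$ for some $w'\in\pi$, since the rotation element computed from the lift $\gamma\lift x$ is by definition $(w')^{1/n}$. Here one must be careful with the order convention flagged in Remark \ref{piordering}: viewing group elements as deck transformations, the point $w'(\gamma\lift x)$ corresponds (in fundamental-group notation, read left to right) to the element $\gamma w'$, and we want this to equal $\gamma w$, wait---more directly, I want the deck transformation taking $\gamma\lift x$ to $\lift f^n(\gamma\lift x)=\gamma w\lift x$. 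In deck-transformation terms $\gamma\lift x$ is obtained from $\lift x$ by applying the transformation $\gamma$, and $\gamma w\lift x$ by applying $\gamma w$ (in left-to-right/$\pi$ order this is the composite), so the transformation carrying the former to the latter is conjugate: it is $\gamma w\gamma^{-1}$ acting appropriately. Concretely, the word $w'$ with $\lift f^n(\gamma\lift x)=w'(\gamma\lift x)$ works out to $w'=\gamma w\gamma^{-1}$ (I will double-check the handedness against Remark \ref{piordering} when writing this up, but the outcome is a conjugate of $w$ either way).

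Therefore the rotation element computed from the alternative lift is $(\gamma w\gamma^{-1})^{1/n}$, and since $\gamma w\gamma^{-1}$ is conjugate to $w$ in $\pi$, this is exactly what "independent up to conjugacy" means in the sense defined just before the lemma: both rotation elements have the form (conjugate element)${}^{1/n}$ with the same exponent $1/n$. This completes the argument.

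The only real subtlety---and the step I expect to be the main obstacle---is getting the conjugation on the correct side, because of the clash between the left-to-right reading of $\pi$ and the right-to-left composition of deck transformations described in Remark \ref{piordering}. Everything else (the iteration of the equivariance identity, the transitivity of the deck action on fibers, and the formal manipulation of rational exponents via $(w^k)^r=w^{kr}$) is routine. A secondary point worth a sentence is that $w'$ must genuinely lie in $\pi$ and not merely in some $\Q$-group: this is automatic since $w'$ is built from $w,\gamma\in\pi$ by group operations, so the definition of $\rho$ at the new lift applies verbatim.
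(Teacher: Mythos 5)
Your proof is correct and follows essentially the same route as the paper's: apply the equivariance identity $\lift f^n(\gamma\lift x) = f^n_\#(\gamma)\lift f^n(\lift x) = \gamma w\lift x$ (using $f^n_\#=\mathrm{id}$), then regroup as $(\gamma w\gamma^{-1})\gamma\lift x$ to read off the new rotation element $(\gamma w\gamma^{-1})^{1/n}$. The handedness worry you flag resolves exactly as you guessed, since the required identity $\gamma w = w'\gamma$ is purely a group-element rewrite.
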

\begin{proof}
Let $\lift f^n(\lift x) = w\lift x$, so the rotation element is $w^{1/n}$ when we use the lifted point $\lift x$. Any alternative choice of a point of $p^{-1}(x)$ will have the form $\gamma \lift x$ for some $\gamma \in \pi$. Since $f$ is homotopic to the identity we have 
\[ \lift f^n (\gamma \lift x) = f^n_\#(\gamma) \lift f^n(\lift x) =  f^n_\#(\gamma) w \lift x = (\gamma w \gamma^{-1}) \gamma \lift x,\]
and so the rotation element when using the lifted point $\gamma \lift x$ is $(\gamma w \gamma^{-1})^{1/n}$, which is conjugate to $\rho(\lift x) = w^{1/n}$.
\end{proof}

The definition of the rotation element does depend on the choice of lift $\lift f$, but in a predictable way: 
\begin{lem}\label{liftindep}
Let $\rho(x)=w^{1/n}$ with respect to some lift $\lift f$ and let $\gamma \in \pi$. If $f$ is homotopic to the identity, then $x$ has rotation element $(\gamma^n w)^{1/n}$ with respect to the lift $\gamma \lift f$.
\end{lem}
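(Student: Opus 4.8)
The plan is to compute the iterate $(\gamma\lift f)^n(\lift x)$ directly and then read off the rotation element from Definition~\ref{roteltdefn}. The hypothesis $\rho(x)=w^{1/n}$ with respect to $\lift f$ unpacks to $\lift f^n(\lift x)=w\lift x$. The lift $\gamma\lift f$ is the one obtained from $\lift f$ by post-composing with the covering transformation $\gamma$; concretely it acts by $(\gamma\lift f)(P)=\gamma\bigl(\lift f(P)\bigr)$ for every $P\in\lift G$, and as $\gamma$ ranges over $\pi$ this accounts for every lift of $f$. So the whole computation reduces to tracking how the $n$ extra factors of $\gamma$ interact with the iterates of $\lift f$.

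The key input is the identity recorded just before Definition~\ref{roteltdefn}: because $f$ is homotopic to the identity, $\lift f$ commutes with the $\pi$-action, i.e.\ $\lift f(\delta P)=\delta\,\lift f(P)$ for every $\delta\in\pi$ and every point $P$. The excerpt states this for $P$ a chosen lift, but the general case follows at once from $\lift f(u\lift y)=f_\#(u)\lift f(\lift y)=u\,\lift f(\lift y)$. Feeding this into the iterate, each copy of $\gamma$ slides past every intervening copy of $\lift f$, and since the same element $\gamma$ occurs at each stage the $n$ factors simply accumulate to $\gamma^n$, giving
\[ (\gamma\lift f)^n(\lift x)=\gamma^n\,\lift f^n(\lift x)=\gamma^n w\,\lift x. \]
Reading Definition~\ref{roteltdefn} with the word $\gamma^n w$ in place of $w$, the rotation element of $x$ with respect to $\gamma\lift f$ is $(\gamma^n w)^{1/n}$, as claimed; no exponent manipulation is needed beyond observing that $\gamma^n w$ is a single element of $\pi$.

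I expect the only genuinely delicate point to be the bookkeeping forced by Remark~\ref{piordering}: covering transformations compose in the order opposite to concatenation of loop classes, so one must be careful that ``$\gamma\lift f$'' and the commutation identity are set up so that the $n$ accumulated factors land as $\gamma^n$ on the \emph{left} of $w$ and not as the conjugate $w\gamma^n$ on the right. For the ``up to conjugacy'' statements such as Lemma~\ref{pointindep} this ambiguity would be harmless, but the exact formula in this lemma requires consistency. It is also worth flagging where $f\simeq\mathrm{id}$ enters: without it the commutation step would instead produce $f_\#^{\,n-1}(\gamma)\cdots f_\#(\gamma)\,\gamma$ rather than $\gamma^n$, and no clean formula would result. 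Once the convention is pinned down, the commutation identity does all the work and the rest is the one-line computation above.
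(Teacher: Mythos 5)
Your proposal is correct and takes essentially the same one-line approach as the paper: since $f\simeq\mathrm{id}$ makes $\lift f$ commute with the $\pi$-action, $(\gamma\lift f)^n(\lift x)=\gamma^n\lift f^n(\lift x)=\gamma^n w\lift x$, and the definition gives $(\gamma^n w)^{1/n}$. Your extra remarks about why the commutation identity holds and about the left/right bookkeeping from Remark~\ref{piordering} are sensible elaborations, but the core computation and conclusion coincide exactly with the paper's proof.
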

\begin{proof}
We have $(\gamma\lift f)^n(\lift x) = \gamma^n \lift f^n(\lift x) = \gamma^n w\lift x$, and so the rotation element is $(\gamma^nw)^{1/n}$.
\end{proof}

If the fundamental group has a single generator, $a$, and $\rho(x)=a^{m/n}$ with respect to $\lift f$, then $\rho(x)=a^{k+(m/n)}$ with respect to the lifting $a^k \lift f$. This is in agreement with the classical theory on circle maps, where changing the lifting adds an integer to the rotation number.

The following lemma shows that, when $f$ is homotopic to the identity, the rotation element is the same up to conjugacy for the various points of a periodic orbit. 
\begin{lem}\label{orbitindep}
Let $x \in \Fix(f^n)$ and let $O \subset G$ be the orbit of $x$. When $f$ is homotopic to the identity, the rotation elements of points in $O$ are the same up to conjugacy.
\end{lem}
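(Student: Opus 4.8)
The plan is to reduce the statement to a single point of the orbit by combining the basic lifting identity $\lift f(\gamma \lift x) = f_\#(\gamma)\lift f(\lift x)$ with Lemma \ref{pointindep}, which already allows us to compute a rotation element using \emph{any} lifted representative of a point (the answer being well defined only up to conjugacy). So it suffices to exhibit, for each point $y$ of the orbit $O$, one convenient lift of $y$ whose rotation element is visibly conjugate to $\rho(x)$. Write $\rho(x) = w^{1/n}$, so $\lift f^n(\lift x) = w\lift x$.

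First I would note that every point of $O$ has the form $y = f^j(x)$ for some $j \ge 0$, and that $y \in \Fix(f^n)$ because $f^n$ and $f^j$ commute. Then I would take the lift $\lift f^j(\lift x)$ of $y$ (it is a lift of $y$ since $p\circ \lift f = f\circ p$). Using that $\lift f^n$ and $\lift f^j$ are iterates of the same map and hence commute, and that $f$ is homotopic to the identity so that $f^j_\#$ is the identity homomorphism on $\pi$, one computes
\[ \lift f^n\!\big(\lift f^j(\lift x)\big) = \lift f^j\!\big(\lift f^n(\lift x)\big) = \lift f^j(w\lift x) = f^j_\#(w)\,\lift f^j(\lift x) = w\,\lift f^j(\lift x). \]
Thus the rotation element of $y$ computed with the lift $\lift f^j(\lift x)$ is exactly $w^{1/n}$. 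Finally, the canonical lift $\lift y$ of $y$ (from the tilde convention) differs from $\lift f^j(\lift x)$ by some covering transformation, so Lemma \ref{pointindep} gives that the rotation element of $y$ with respect to $\lift y$ is conjugate to $w^{1/n} = \rho(x)$; since $y$ was an arbitrary point of $O$, all points of $O$ have conjugate rotation elements.

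There is no serious obstacle here; the one step that genuinely uses the hypothesis that $f$ is homotopic to the identity is the equality $f^j_\#(w) = w$. Without it the argument would only yield $\rho(y) = \big(f^j_\#(w)\big)^{1/n}$, which need not be conjugate to $w^{1/n}$ in a nonabelian free group. The only care required is bookkeeping of the left-to-right group convention versus the deck-transformation convention of Remark \ref{piordering}, which does not affect the computation above since $f^j_\#$ is trivial.
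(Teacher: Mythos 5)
Your proof is correct and follows essentially the same argument as the paper's: both write $y = f^j(x)$, apply the commutation $\lift f^n \circ \lift f^j = \lift f^j\circ \lift f^n$ together with the lifting identity $\lift f^j(w\lift x) = f^j_\#(w)\lift f^j(\lift x) = w\lift f^j(\lift x)$ (using $f_\#=\mathrm{id}$), and conclude conjugacy. The only organizational difference is that you compute the rotation element first with the convenient lift $\lift f^j(\lift x)$ and then invoke Lemma~\ref{pointindep} to pass to the canonical lift $\lift y$, whereas the paper folds both steps into a single computation by writing $\lift y = \gamma\lift f^m(\lift x)$ and directly producing the conjugate $\gamma w\gamma^{-1}$.
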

\begin{proof}
Let $y \in O$, so $y = f^m(x)$ for some $m$, and there is some $\gamma$ with $\gamma \lift f^m(\lift x) = \lift y$. Let $\lift f^n(\lift x) = w\lift x$, so $\rho(x)=w^n$. Then we have
\[ \lift f^n(\lift y) = \lift f^{n}(\gamma \lift f^m(\lift x)) = \gamma \lift f^{n+m}(\lift x) = \gamma \lift f^m(w\lift x) = \gamma w \lift f^m(\lift x) = \gamma w \gamma^{-1} \lift y,
\]
and so $\rho(y) = (\gamma w \gamma^{-1})^{1/n}$ which is conjugate to $w^{1/n}$ as desired.
\end{proof}

\begin{rem}
When $f$ is not homotopic to the identity, the above lemmas will still hold with some simple modifications involving the induced homomorphism $f_\#:\pi \to \pi$. We also must use ``twisted conjugacy'' classes rather than conjugacy classes. We say that $w$ and $v$ are $f_\#^n$-twisted conjugate when there is some $\gamma$ with $w = f^n_\#(\gamma)v\gamma^{-1}$. This is the standard approach in Nielsen fixed and periodic point theory, in which fixed points are grouped according to their Reidemeister (twisted conjugacy) classes. See \cite{jian83} for details. The proof of Lemma \ref{pointindep} will hold in this setting without modifications.

For Lemma \ref{liftindep} when $f$ is not homotopic to the identity, the factor of $\gamma^n$ needs to be replaced by $\gamma f_\#(\gamma) \dots f_\#^{n-1}(\gamma)$, which is the ``algebraic boost'' also familiar in Nielsen theory. 

As is well known in Nielsen theory, the points of a periodic orbit do not all have the same Reidemeister classes. Nonetheless, it is well known (and proven by the same argument used in Lemma \ref{orbitindep}) that if $x$ and $y$ are as in the lemma and $\rho(x)=w^{1/n}$, then $\rho(y)$ is $f^n_\#$-twisted conjugate to $f^m_\#(w)^{1/n}$. The Reidemeister classes of points in the same orbit can then be grouped into a well defined ``Reidemeister orbit'' which is fundamental in Nielsen periodic point theory. 
\end{rem}

\subsection{Coherent labelings of the universal cover}
Thus far we have allowed the labeling of the vertices and edges of $\lift G$ to be somewhat arbitrary: for example for an edge $E$ with initial vertex $V$, we have not required that the particular chosen lift $\lift E$ have initial vertex $\lift V$. In order to prove meaningful theorems about the rotation element, we must require a more meaningful scheme for labeling the vertices and edges of $\lift G$. The general idea is that that adjacent vertices and edges in $\lift G$ will whenever possible have the same $\pi$-labels.

This is achieved specifically as follows: Choose a spanning tree $\mathcal T$ of $G$ with root at the basepoint vertex $x_0$, and lift the entire spanning tree $\mathcal T$ into a tree of $\lift G$ by lifting the root vertex to a particular arbitrarily chosen $\lift x_0$. This lifting of $\mathcal T$ is used to define the particular lifted copies $\lift V_i$ for every vertex of $G$, and also defines the particular liftings of each edge in $\mathcal T$. 

It remains to choose the particular lifting for each edge $E$ not in $\mathcal T$. Let $V_1$ and $V_2$ be the initial and terminal vertices of $E$. Because $\mathcal T$ is a spanning tree it will include $V_1$, and so we choose the particular lifting $\lift E$ to be the lift of $E$ starting at $\lift V_1$, the already chosen particular lifting of $V_1$. Note that $\lift E$ cannot connect $\lift V_1$ to $\lift V_2$, since this would close a loop in $\lift G$ which is simply connected. 

Thus $\lift E$ will connect $\lift V_1$ to $\theta \lift V_2$ for some $\theta \in \pi$. We will choose our labeling in a natural way so that this $\theta \in \pi$ is the fundamental group element corresponding to the single loop in $G$ obtained by adding $E$ to the chosen spanning tree $\mathcal T$. The set of $\theta$s constructed in this way for various $E \not \in \mathcal T$ are exactly the generators of $\pi$.

Generally for an edge $E_i$, let $\theta_i\in \pi$ be the unique word such that $\lift E_i$ connects $\lift V_1$ to $\theta_i\lift V_2$, where $V_1$ and $V_2$ are the vertices of $E_i$. These $\theta_i$ are trivial when $E_i\in \mathcal T$, and are generators of $\pi$ when $E_i \not \in \mathcal T$.

For the rest of the paper we will restrict our focus to labelings of $\lift G$ constructed as above, by lifting a spanning tree $\mathcal T$ and labeling terminal vertices of edges outside of $\mathcal T$ according to the corresponding generators of $\pi$. A labeling constructed in this way we will call \emph{coherent}.

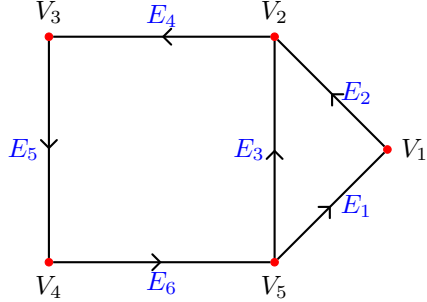
\begin{figure}
\begin{center}
	\begin{tikzpicture}[auto, scale=1.5]

		\node[vertex, red] (four)   at (0,0)    [label=below :$V_4$] {};
		\node[vertex, red] (five)   at (2,0)  [label=below:$V_5$] {};
		\node[vertex, red] (one) at (3,1)    [label=right:$V_1$] {};
		\node[vertex, red] (two) at (2,2)    [label=above:$V_2$] {};
		\node[vertex, red] (three) at (0,2)    [label=above:$V_3$] {};

		\draw[oriented edge] (four)   to node[below,blue]      {$E_6$} (five);
		\draw[oriented edge] (five)  to  node[midway,below,right, blue]{$E_1$}  (one);
		\draw[oriented edge] (five)   to node[blue]    {$E_3$} (two);
		\draw[oriented edge] (one)   to node[above, right, blue]    {$E_2$} (two);
		\draw[oriented edge] (two)   to node[above, blue]    {$E_4$} (three);
		\draw[oriented edge] (three)   to node[left, blue]    {$E_5$} (four);

	\end{tikzpicture}
\end{center}
\caption{Labeled graph\label{housegraph}}
\end{figure}

\begin{figure}
\begin{center}
	\begin{tikzpicture}[auto, scale=1.5]

		\node[vertex, red] (four)   at (0,0)    [label=below :$V_4$] {};
		\node[vertex, red] (five)   at (2,0)  [label=below:$V_5$] {};
		\node[vertex, red] (one) at (3,1)    [label=right:$V_1$] {};
		\node[vertex, red] (two) at (2,2)    [label=above:$V_2$] {};
		\node[vertex, red] (three) at (0,2)    [label=above:$V_3$] {};

		\draw[oriented edge, dashed] (four)   to  (five);
		\draw[oriented edge, very thick] (five)  to  node[midway,below,right, blue]{$E_1$}  (one);
		\draw[oriented edge, very thick] (five)   to node[blue]    {$E_3$} (two);
		\draw[oriented edge, dashed] (one)   to  (two);
		\draw[oriented edge, very thick] (two)   to node[above, blue]    {$E_4$} (three);
		\draw[oriented edge, very thick] (three)   to node[left, blue]    {$E_5$} (four);

	\end{tikzpicture}
	\caption{A spanning tree chosen for the graph from Figure 1\label{housest}}
\end{center}
\end{figure}
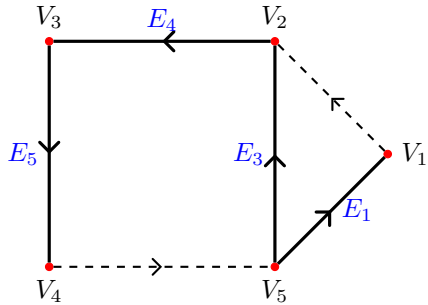

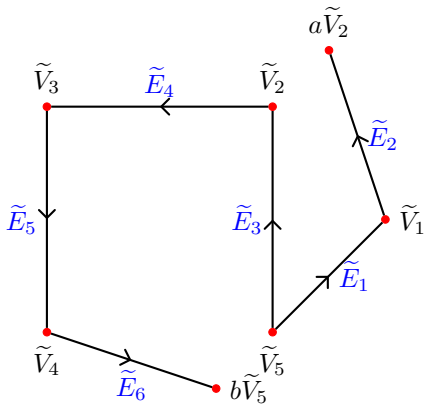
\begin{figure}
\begin{center}
	\begin{tikzpicture}[auto, scale=1.5]

		\node[vertex, red] (four)   at (0,0)    [label=below :$\lift V_4$] {};
		\node[vertex, red] (five)   at (2,0)  [label=below:$\lift V_5$] {};
		\node[vertex, red] (one) at (3,1)    [label=right:$\lift V_1$] {};
		\node[vertex, red] (two) at (2,2)    [label=above:$\lift V_2$] {};
		\node[vertex, red] (three) at (0,2)    [label=above:$\lift V_3$] {};
		\node[vertex, red] (atwo) at (2.5,2.5)    [label=above:$a\lift V_2$] {};
		\node[vertex, red] (bfive) at (1.5,-.5)    [label=right:$b\lift V_5$] {};

		\draw[oriented edge] (four)   to node[below,blue]      {$\lift E_6$} (bfive);
		\draw[oriented edge] (five)  to  node[midway,below,right, blue]{$\lift E_1$}  (one);
		\draw[oriented edge] (five)   to node[blue]    {$\lift E_3$} (two);
		\draw[oriented edge] (one)    to node[above, right, blue]    {$\lift E_2$} (atwo);
		\draw[oriented edge] (two)   to node[above, blue]    {$\lift E_4$} (three);
		\draw[oriented edge] (three)   to node[left, blue]    {$\lift E_5$} (four);

	\end{tikzpicture}

	\caption{Portion of the universal covering space, coherently labeled according to the spanning tree in Figure 2\label{houseuc}}
	
	\end{center}
	\end{figure}

As an example we give a construction of a coherent labeling when $G$ is the graph in Figure \ref{housegraph}. The vertex $V_1$ is chosen as our basepont, and as the root of the spanning tree shown in Figure \ref{housest}. In this example $\pi$ is free on two generators which we call $a$ and $b$, where $a$ is represented by the loop $E_2E_3^{-1}E_1$ and $b$ is represented by $E_1^{-1}E_3E_4E_5E_6E_1$. A portion of $\lift G$ is pictured in Figure \ref{houseuc}, with the coherent labeling induced by $\mathcal T$.
\subsection{The linearization of the map}

Each edge in the graph is homeomorphic to the unit interval. We use the homeomorphism to define the distance between points in an edge and to give each edge unit length. A path consisting of $m$ edges is defined to have length $m$ in the obvious way. Suppose that an edge $E_i$ is mapped by $f$ to a path with $m$ edges, then there is a natural induced map $f^*:[0,1] \to [0,m]$. We will say that $f$ is linear on $E_i$ if $f^*$ is linear.

We now define the {\em linearization} of the map $f$, which we will denote by $L_f$.  For all vertices $V \in G$, we define $L_f(V)=f(V)$. If $E_i$ is an edge with endpoints $V_1$ and $V_2$, we define $L_f$ to map $E_i$ linearly onto the contracted path from $f(V_1)$ to $f(V_2)$ that is obtained from $f(E_i)$.

More formally, let $[0,1]=I$, we define $L_f:G  \to G$ to be the {\em linearization} of $f$ if for each edge $E_i$ there is homotopy $h_i:E_i \times I \to G $ which has the following properties : $h_i(x,0)=f(x)$ for all $x \in E_i$; $h_i(x,1)=L_f(x)$ for all $x \in E_i$; $h_i(V_1, t)=f(V_1)=L_f(V_1)$ for all $t \in I$; $h_i(V_2, t)=f(V_2)=L_f(V_2)$ for all $t \in I$; and such that $L_f$ is linear on $E_i$.
If $L_f$ is the linearization of a map $f$, we say $L_f$ is \emph{linearized}.

In the literature the maps that we are calling linearized are sometimes referred to as {\em linear models} for tree maps or {\em connect-the-dots} maps for interval maps, see \cite{ ALM}.

\section{Rotation elements for interior points of an edge}\label{interiorsection}

Throughout the rest of the paper we consider only continuous vertex maps $f$ which are homotopic to the identity. Such maps behave predictably near the vertices according to the vertex permutation, but can be wild on the interior of the edges. For this reason we will not be able to derive strong results concerning the rotation elements on interior points based on information from the map $f$. 

However, the behavior on the interior of the edges is rigid for linearized maps, and this will allow more to be said. In fact information about rotation elements for the linearization of $f$ will provide information about rotation elements for $f$, as the following lemma shows. 

Our chosen lifting $\lift f$ defines canonically a lifting of the linearization $\lift L_f$ by lifting the homotopy from $f$ to $L_f$ to the universal cover starting at $\lift f$. For clarity below, let $\rho(x) = \rho_{\lift f}(x)$ be the rotation element of $x\in \Fix(f^n)$ with respect to $\lift f$, and let $\rho_{\lift L_f}(x)$ be the rotation element of $x\in \Fix(L_f^n)$ with respect to $\lift L_f$.

\begin{lem}\label{linearizationlem}
If $L_f$ has a periodic point $x$ with $\rho_{\lift L_f}(x)=w^{1/n}$, then $f$ has a periodic point $z$ with $\rho_{\lift f}(z)=w^{1/n}$
\end{lem}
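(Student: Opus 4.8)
The plan is to build the periodic point $z$ for $f$ directly from the periodic point $x$ for $L_f$, using the fact that $L_f$ and $f$ agree on vertices and differ only by a homotopy that is rel endpoints on each edge. First I would observe that the orbit $\{x, L_f(x), \dots, L_f^{n-1}(x)\}$ of $x$ is a finite set of points in $G$, and that if any of these points is a vertex then $L_f$ and $f$ agree there, so $x$ is already a periodic point of $f$ and we may take $z = x$ (the equality $\rho_{\lift f}(x) = \rho_{\lift L_f}(x)$ in this case needs a short check using the canonical lift $\lift L_f$, but it is immediate from Definition \ref{roteltdefn} since the lifted orbits coincide). So I would reduce to the case where the entire orbit of $x$ lies in the interiors of edges.

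Next, the key point: because $L_f$ is linear (hence a homeomorphism onto its image when restricted to a small enough subinterval) near $x$, and $x$ is in the interior of an edge, I would want to argue that $f$ has a periodic point $z$ near $x$ by a continuity/degree argument. Concretely, consider the homotopy $H: G \times I \to G$ obtained by running the edgewise homotopies $h_i$ simultaneously, so $H_0 = f$ and $H_1 = L_f$, with $H_t$ fixing all vertices for every $t$. Then $H_t$ is a vertex map homotopic to the identity for each $t$. I would track the periodic point: at $t=1$ there is a fixed point of $L_f^n$ at $x$ with a definite "crossing sign" (the graph of $L_f^n$ on the relevant edge crosses the diagonal transversally, or at least with nonzero local fixed point index, because $L_f$ is piecewise linear). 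The cleanest route is to invoke the fixed point index / Nielsen theory: the fixed point class of $L_f^n$ containing $x$, which corresponds to the Reidemeister class of $w \in \pi$, has nonzero index, and fixed point index of a fixed point class is a homotopy invariant. Hence $f^n = H_0^n$ has a fixed point $z$ in the same Reidemeister class, i.e. $\lift f^n(\lift z) = \gamma w \gamma^{-1} \lift z$ for some $\gamma$; and then, using that index is actually preserved class-by-class along the homotopy together with the coherent labeling, I would pin down that the lift can be chosen so the group element is exactly $w$, giving $\rho_{\lift f}(z) = w^{1/n}$.

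I expect the main obstacle to be the last sentence of that paragraph: getting the group element attached to $z$ to be \emph{exactly} $w$ and not merely conjugate to it. Homotopy invariance of the fixed point index only delivers a fixed point in the right Reidemeister class, which a priori gives $\rho_{\lift f}(z)$ conjugate to $w^{1/n}$, whereas the lemma asserts equality. To fix this I would use the canonical lift $\lift L_f$ of the linearization (constructed in the excerpt by lifting the homotopy from $\lift f$) and lift the whole homotopy $H$ to $\lift G$ starting from $\lift f$; then the Reidemeister class of $\lift L_f^n$ containing $\lift x$ is literally the class of $w$, and index is preserved within that \emph{lifted} class, producing a point $\lift z \in \lift G$ with $\lift f^n(\lift z) = w \lift z$. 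Projecting down gives the desired $z$ with $\rho_{\lift f}(z) = w^{1/n}$. A secondary technical point I would need to handle carefully is that $f$ restricted to an edge may be "wild," so the fixed points of $f^n$ on that edge need not be isolated; this is not actually a problem for the index argument since the index of a compact set of fixed points is still defined and additive, but it does mean I should phrase everything in terms of the index of fixed point classes rather than sums of local indices at isolated points.
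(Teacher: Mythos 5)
Your approach is genuinely different from the paper's, and it has a real gap. The paper's proof is entirely elementary: it observes that $\lift L_f(\lift E)\subseteq\lift f(\lift E)$ because the linearized image is the contraction of $\lift f(\lift E)$ in the tree $\lift G$, and then (by induction on the edges making up $\lift L_f^{n-1}(\lift E)$) that $\lift L_f^n(\lift E)\subseteq\lift f^n(\lift E)$. Since $\lift x$ is interior to $\lift E$ and $L_f^n$ is piecewise linear with nonzero slope off the preimages of vertices, the path $\lift L_f^n(\lift E)$ passes through $w\lift x$ without turning around in the interior of $w\lift E$, so $w\lift E\subset\lift L_f^n(\lift E)\subseteq\lift f^n(\lift E)$. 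A covering/intermediate-value argument on a subinterval $\lift J\subset\lift E$ with $\lift f^n(\lift J)=w\lift E$ then directly produces $\lift z$ with $\lift f^n(\lift z)=w\lift z$. No fixed point index, no Nielsen theory.

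Your argument, by contrast, leans on a claim that is not established and can fail: that the fixed point class of $L_f^n$ containing $x$ (equivalently, $\Fix(w^{-1}\lift L_f^n)$ projected down) has nonzero index. You offer ``because $L_f$ is piecewise linear,'' but piecewise linearity does not force a nonzero class index. For a concrete failure: if $E$ lies on a cycle of $G$ on which $L_f^n$ restricts to the identity (so every point of that cycle, including your interior $x$, is fixed and in the same class for the trivial lift $w=1$), the index of that class is the relevant Lefschetz number, which is $\chi(S^1)=0$. Homotopy invariance of a zero index tells you nothing, so your argument produces no fixed point of $f^n$. The lemma is still true in this situation --- the vertices of $E$ are fixed by both $L_f$ and $f$ with the same lift --- but those are exactly the points you set aside when you reduced to the interior case, so your argument loses them. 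To rescue the Nielsen route you would need a localization showing that the index contribution from the subinterval of $E$ that covers $E$ is $\pm1$ and that this local contribution persists under the edgewise homotopy; but carrying that out essentially reproduces the paper's covering argument, at which point the index machinery is doing no work. The secondary parts of your write-up (handling the vertex case first, and lifting the homotopy to $\lift G$ to pin down $w$ exactly rather than up to conjugacy) are fine, but they do not repair the central gap.
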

\begin{proof}
It is clear that $\lift L_f( \lift E) \subseteq \lift f( \lift E)$. Induction gives that $\lift L_f^n( \lift E) \subseteq \lift f^n( \lift E)$. Since $\lift L_f^n(\lift x) = w\lift x$ we have $w\lift E \subset \lift L^n_f(\lift E)$. Thus we have $w \lift E \subset \lift f^n(\lift E)$.

Since $\lift f^n(\lift E)$ is a path in the simply connected space $\lift G$, the fact that $w\lift E \subset \lift f^n(\lift E)$ means that there is some interval $\lift J \subset \lift E$ with $\lift f^n(\lift J) = w\lift E$. Projecting into $G$ we have $f^n(J)=E$ and so there is some point $z\in J$ with $f^n(z)=z$ and $\lift f^n(\lift z) = w\lift z$ as desired.
\end{proof}

We will require a simple geometric lemma which holds for linear maps.

\begin{lem}\label{intervallem}
Let $E$ be an edge of $G$ and $g:G\to G$ be a linear vertex map.
If $x$ is in the interior of some edge $E$ and $g^n(x)=x$, then there is some interval $C \subset E$ containing $x$ with $g^n(C) = E$.
\end{lem}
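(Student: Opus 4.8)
The plan is to exploit the rigidity of linear vertex maps: on each edge, $g$ is linear onto an edge-path, so $g^n$ restricted to $\lift E$ (in the universal cover) is a piecewise-linear map onto an edge-path, with breakpoints only over the preimages of vertices. First I would lift to the universal cover, choosing $\lift x$ over $x$ and tracking the orbit. Since $g$ is a vertex map homotopic to the identity (or at least a vertex map — linearity is the key hypothesis), $\lift g$ maps $\lift E$ linearly onto an edge-path in $\lift G$, and by induction $\lift g^n$ maps $\lift E$ onto an edge-path $P = \lift g^n(\lift E)$, with the map $\lift g^n|_{\lift E}$ being piecewise-linear and monotone-to-constant on each linear piece. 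Because $g^n(x) = x$ with $x$ interior, we have $\lift g^n(\lift x) = w\lift x$ for the appropriate $w$, so the point $w\lift x$ lies in the interior of the edge $w\lift E$, and $w\lift E$ is one of the edges traversed by the path $P$.

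The core step is then a one-dimensional covering argument on $\lift E \cong [0,1]$: the point $\lift x$ sits in the interior, $\lift g^n(\lift x)$ lands in the interior of some edge $w\lift E$ appearing in the image edge-path $P$, and I want an interval $\lift C \subset \lift E$ containing $\lift x$ that maps exactly onto $w \lift E$. Concretely, look at the linear piece of $\lift g^n|_{\lift E}$ containing $\lift x$ in its interior; on this piece $\lift g^n$ is linear and carries an interval of $\lift E$ across the whole edge $w\lift E$ (here one uses that consecutive breakpoints of $\lift g^n|_{\lift E}$ map to vertices, i.e. to endpoints of edges in $P$, so each maximal linear piece covers a union of edges of $P$, hence in particular covers the edge $w\lift E$ that contains $\lift g^n(\lift x)$ in its interior). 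Extend $\lift x$ within this linear piece to the subinterval $\lift C$ whose image is precisely $w\lift E$; since $\lift g^n$ is linear and surjective onto $w\lift E$ on this piece, such a $\lift C$ exists and contains $\lift x$ in its interior (as $\lift g^n(\lift x) = w\lift x$ is interior to $w\lift E$). Projecting $\lift C$ down to $G$ gives the desired interval $C \subset E$ with $g^n(C) = E$, and $x \in C$.

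The main obstacle I anticipate is being careful about the breakpoint/linear-piece structure of $\lift g^n|_{\lift E}$: one must verify that breakpoints of the iterate occur only at preimages of vertices, so that each maximal linear piece of $\lift g^n$ genuinely covers a full sequence of edges rather than stopping partway through an edge. This follows by induction — new breakpoints of $\lift g^{k+1}|_{\lift E}$ are preimages under $\lift g^k|_{\lift E}$ of breakpoints of $\lift g$ (which are vertices, since $g$ is a linear vertex map) together with the old breakpoints of $\lift g^k|_{\lift E}$ (inductively preimages of vertices), and preimages of vertices under a vertex map are... not necessarily vertices, but the relevant point is that the image of each maximal linear piece is an edge-path beginning and ending at vertices. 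One should phrase the induction as: $\lift g^k$ maps each maximal linear subinterval of $\lift E$ homeomorphically onto a single edge of $\lift G$, which is immediate for $k=1$ by linearity and propagates because the image of such a subinterval under one more application of $\lift g$ is again a single edge's worth of linear image. Granting that, the interval $\lift C$ is simply the maximal linear piece of $\lift g^n|_{\lift E}$ containing $\lift x$, and its image is the single edge $w\lift E$; the rest is a routine projection.
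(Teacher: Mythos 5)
Your overall strategy is sound and essentially matches the paper's: use the piecewise linearity of $g^n$ on $E$, locate the linear piece containing $x$, and extract a subinterval mapping exactly onto $E$. The paper does this without any detour through the universal cover: it simply parameterizes $E$ as $[0,1]$, observes that $g^n$ has some slope $k \neq 0$ near $x$, and that the linear formula $g^n(x+\epsilon)=x+k\epsilon$ persists until the image reaches $0$ or $1$ (because the breakpoints of $g^n|_E$ are sent to vertices, so none can intervene while the image stays interior to $E$). Your lift to $\lift G$ is harmless but adds no power here; all the content lives inside a single edge.

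However, there is a factual slip in your closing paragraph that you should repair. The inductive claim ``$\lift g^k$ maps each maximal linear subinterval of $\lift E$ homeomorphically onto a single edge of $\lift G$'' is false, already for $k=1$: by definition, $\lift g|_{\lift E}$ is linear on \emph{all} of $\lift E$ and maps it onto an edge-path that generally has several edges, so the maximal linear piece (namely $\lift E$ itself) does not map onto a single edge. More generally, a maximal linear piece of $\lift g^n|_{\lift E}$ maps, with constant speed, onto a sub-edge-path of $P$ that may contain many edges; what is true is that it maps onto a \emph{union of full edges} (because its endpoints are breakpoints and hence go to vertices, and the map is monotone with nonzero slope on that piece). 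That weaker and correct statement is exactly what you wrote in your middle paragraph, and it suffices: since $w\lift x = \lift g^n(\lift x)$ is in the interior of $w\lift E$ and the image is a union of full edges, $w\lift E$ is among them, so the preimage of $w\lift E$ inside the linear piece is the interval $\lift C$ you want. If you simply delete the ``single edge'' claim at the end and rely on the union-of-full-edges statement, your proof is correct.
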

\begin{proof}
Let $E$ be parameterized as the interval $[0,1]$, and we identify the point $x$ with its coordinate so that $x\in [0,1]$. Since $g$ is linear and $g^n(x)=x$ there is some ``slope'' $k$ such that $g^n(x+\epsilon) = x + k\epsilon$ for sufficiently small $\epsilon$. In fact this holds as long as $0\le x+k\epsilon \le 1$. Then it is clear that there is some $\delta$ and $\delta'$, one positive and one negative, such that $g^n(x+\delta) = 0$ and $g^n(x+\delta') = 1$. Taking $C$ to be the closed subinterval of $E$ that has endpoints $x+\delta$ and $x+\delta^\prime$ satisfies the desired conclusion.
\end{proof}

For $\alpha, \beta \in \pi$, let $\llangle \alpha, \beta \rrangle \subset \pi$ be the subset of elements generated by products of positive powers of $\alpha$ and $\beta$. This is the semigroup generated by $\alpha$ and $\beta$. Every element $w \in \llangle \alpha, \beta \rrangle$ can be written as $w = v_1 \dots v_l$ where each $v_i \in \{\alpha, \beta\}$ and $l\ge 1$. For $w_1, w_2 \in \pi$, and integers $m,n$, let $\mathcal S(w_1^{1/m}, w_2^{1/n})$ be the set of all elements of the form $v^{1/(rm+sn)}$ where $v\in \llangle w_1, w_2 \rrangle$ is built from $r$ occurances of $w_1$ and $s$ occurances of $w_2$.

In the case of maps on the circle, say $w_1=a^p$ and $w_2=a^q$ for integers $p,q$. Then the set $\mathcal S(a^{p/m}, a^{q/n})$ is the set of all words of the form $a^{(rp+sq)/(rm+sn)}$, which is the set of all $a^r$, where $r$ is any rational between $p/m$ and $q/n$. Thus the set $\mathcal S(w_1^{1/m}, w_2^{1/n})$ is a generalization of the interval of rotation numbers between those of $x$ and $y$.

\begin{thm}\label{bothinterior}
Suppose that $L_f$ has periodic points $x\in \Fix(L_f^m)$ and $y\in \Fix(L_f^n)$ in the interior of some common edge $E$. Then for any $w^{1/k} \in \mathcal S(\rho_{\lift L_f}(x),\rho_{\lift L_f}(y))$, there is a periodic point $z$ of $f$ on the interior of $E$ with $\rho_{\lift f}(z) = w^{1/k}$.
\end{thm}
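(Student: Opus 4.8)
The plan is to prove the single inclusion $\lift L_f^k(\lift E)\supseteq w\lift E$ for the linearization $L_f$, and then to deduce the statement for $f$ exactly as in the proof of Lemma~\ref{linearizationlem}. Write $\rho_{\lift L_f}(x)=w_1^{1/m}$ and $\rho_{\lift L_f}(y)=w_2^{1/n}$, so that $\lift L_f^m(\lift x)=w_1\lift x$ and $\lift L_f^n(\lift y)=w_2\lift y$, and fix $w^{1/k}\in\mathcal S(w_1^{1/m},w_2^{1/n})$, say $w=v_1\cdots v_l$ with each $v_i\in\{w_1,w_2\}$, exactly $r$ of the $v_i$ equal to $w_1$ and $s$ equal to $w_2$, and $k=rm+sn$.

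First I would record two ``seed'' inclusions. Applying Lemma~\ref{intervallem} to the linear vertex map $L_f$ and the point $x$ gives an interval $C_x\subseteq E$ with $x$ in its interior and $L_f^m(C_x)=E$. Its lift $\lift L_f^m(\lift C_x)$ is a connected arc of $\lift G$ projecting onto $E$, hence a single lifted copy of $E$; since it contains $\lift L_f^m(\lift x)=w_1\lift x$, it must equal $w_1\lift E$. Therefore $\lift L_f^m(\lift E)\supseteq\lift L_f^m(\lift C_x)=w_1\lift E$, and symmetrically $\lift L_f^n(\lift E)\supseteq w_2\lift E$.

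The heart of the argument is to concatenate these inclusions $l$ times according to the word $w$. Since $L_f$ is homotopic to the identity, $\lift L_f$ commutes with every deck transformation, so for any $g\in\pi$ we also get $\lift L_f^m(g\lift E)=g\,\lift L_f^m(\lift E)\supseteq gw_1\lift E$ and $\lift L_f^n(g\lift E)\supseteq gw_2\lift E$. Put $g_0=1$, $g_i=v_1\cdots v_i$, and let $t_i=t_{i-1}+m$ if $v_i=w_1$ and $t_i=t_{i-1}+n$ if $v_i=w_2$, so $g_l=w$ and $t_l=k$. I claim $\lift L_f^{t_i}(\lift E)\supseteq g_i\lift E$ for each $i$: assuming it for $i-1$, apply the image operation $\lift L_f^{t_i-t_{i-1}}$ (which is $\lift L_f^m$ or $\lift L_f^n$) to both sides and use the displayed inclusion with $g=g_{i-1}$ to get $\lift L_f^{t_i}(\lift E)\supseteq g_{i-1}v_i\lift E=g_i\lift E$. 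Taking $i=l$ gives $\lift L_f^k(\lift E)\supseteq w\lift E$. Since $\lift L_f^k(\lift E)\subseteq\lift f^k(\lift E)$ (established in the proof of Lemma~\ref{linearizationlem}), we have $w\lift E\subseteq\lift f^k(\lift E)$; as in that proof there is then a subinterval $\lift J\subseteq\lift E$ with $\lift f^k(\lift J)=w\lift E$, and choosing $\lift J$ minimal, projecting to $G$, and comparing $\lift f^k$ with the deck transformation $w$ by the intermediate value theorem produces a periodic point $z$ of $f$ in the interior of $E$ with $\lift f^k(\lift z)=w\lift z$, that is, $\rho_{\lift f}(z)=w^{1/k}$.

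I expect the main obstacle to be the non-commutative bookkeeping in the concatenation step: one must be sure that iterating $\lift L_f$ multiplies the accumulated deck transformations so that after $k$ steps the word obtained is precisely $w=v_1\cdots v_l$, in that order, and not a conjugate or a rearranged product. This is exactly where the hypotheses that $f$ is homotopic to the identity (so $\lift L_f$ commutes with deck transformations) and that $\lift G$ is coherently labeled are essential; without them the argument would only yield a conclusion up to (twisted) conjugacy, which does not pin down a specific element of the free $\Q$-group. A secondary point requiring care is that the periodic point of $f$ actually lies in the interior of $E$ rather than at one of its vertices, which is why one passes to a minimal covering subinterval $\lift J$ before invoking the intermediate value theorem.
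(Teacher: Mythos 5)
Your overall strategy is essentially the same as the paper's: establish the seed inclusions $\lift L_f^m(\lift E)\supseteq w_1\lift E$ and $\lift L_f^n(\lift E)\supseteq w_2\lift E$ via Lemma~\ref{intervallem}, build up the word one letter at a time by iterating $\lift L_f$, and then pass from $L_f$ to $f$ via the argument of Lemma~\ref{linearizationlem}. Working directly with the set inclusions $\lift L_f^{t_i}(\lift E)\supseteq g_i\lift E$ rather than tracking an intermediate periodic point $q$ (as the paper does) is a perfectly good variant and makes the bookkeeping slightly more transparent. But the "non-commutative bookkeeping" you flagged as the main obstacle is exactly where your write-up slips. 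By Remark~\ref{piordering}, the notation $ab\,\lift x$ stands for $b(a(\lift x))$, so applying the deck transformation $g$ to the set labeled $w_1\lift E$ gives $w_1 g\,\lift E$, not $g w_1\lift E$ as you wrote. Your inductive step therefore yields $\lift L_f^{t_i}(\lift E)\supseteq v_i v_{i-1}\cdots v_1\,\lift E$ rather than $v_1\cdots v_i\,\lift E$: the letters accumulate on the \emph{left}, and after $l$ steps you obtain the reversed word $v_l\cdots v_1$.

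This does not invalidate the theorem, because $\mathcal S(\rho(x),\rho(y))$ contains all products built from $r$ copies of $w_1$ and $s$ of $w_2$ in every order, so it is closed under reversal; to hit a target word $w=u_1\cdots u_l$ you simply run your induction with $v_i=u_{l+1-i}$. But as written your equality $T_g(w_1\lift E)=gw_1\lift E$ is false and should be corrected. The paper's proof sidesteps the issue by applying the two iterates in the opposite order, $\lift L_f^{k+m}(\lift C)=\lift L_f^{k}\bigl(\lift L_f^m(\lift C)\bigr)$, so that the newest letter is produced by the \emph{first} application of the map and, by Remark~\ref{piordering}, lands at the right end of the word; your scheme applies the newest power of $\lift L_f$ last, which is why the letter lands on the left instead. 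Either convention works once the order is tracked consistently.
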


\begin{proof}
Let $\rho_{\lift L_f}(x) = \alpha^{1/m}$ and $\rho_{\lift L_f}(y)=\beta^{1/n}$. Since $w^{1/k} \in \mathcal S(\rho(x),\rho(y))$, we will have $w^{1/k}=(v_1\dots v_l)^{1/(rm+sn)}$ where $v_i \in \{\alpha,\beta\}$ and $r$ and $s$ are the number of times that $\alpha$ and $\beta$ appear in this expression of $w$. We must show that there is a periodic point $t\in \Fix(f^{k})$ with $\lift f^{k}(\lift t) = w\lift t$. By Lemma \ref{linearizationlem} it suffices to show that there is some $z \in \Fix(L_f^{k})$ with $\lift L_f^{k}(\lift z) = w \lift z$.

Our proof is by induction on $l$. The case where $l=1$ is trivial. For the inductive case we assume that there is some periodic point $q$ with $\lift L_f^{k}(\lift q) = v_1\dots v_l \lift q$, and we will prove that there is some periodic point $z$ with period $p$ with $\lift L_f^p(\lift z) = v_1\dots v_l v_{l+1} \lift z$ for any $v_{l+1}\in \{\alpha, \beta\}$. The period $p$ should be $k+m$ or $k+n$ depending on whether $v_{l+1}$ is $\alpha$ or $\beta$. 

Without loss of generality we will assume $v_{l+1} = \alpha$, so we must show that there is a periodic point $z$ with $\lift L_f^{k+m}(\lift z) = v_1\dots v_{l}\alpha\lift z$.

Since $\lift L_f^n(\lift x) = \alpha \lift x$, by Lemma \ref{intervallem} there is some subinterval $C \subset E$ containing $x$ with $\lift L_f^m(\lift C) = \alpha \lift E$. Similarly there is a subinterval $D\subset E$ containing $q$ with $\lift L_f^{k}(\lift D) = v_1 \dots v_l \lift E$. Since $\lift L_f^m(\lift C) = \alpha \lift E$ we will have $ \alpha \lift D \subset L_f^m(\lift C)$, and so  
\[
\lift L_f^{k+m}(\lift C) = \lift L_f^{k}(\lift L_f^m(\lift C)) \supset \lift L_f^{k}(\alpha \lift D) 
= \alpha (\lift L_f^{k}(\lift D)) =  \alpha(v_1 \dots v_l \lift E) = v_1\dots v_l \alpha \lift E. 
\]
(The change in order in the last step is according to Remark \ref{piordering}.) 
Thus since $\lift C \subset \lift E$, there is a point $z \in C$ with $\lift L_f^{(k+m)}(\lift z) = v_1\dots v_l\alpha \lift z$ as desired.
\end{proof}




The proof of the theorem above can be extended to the situation where one periodic point is at a vertex, subject to an additional assumption on the behavior of $f$ near that vertex. The theorem is stated in terms of the initial vertex, and also holds (by the same argument) for the terminal vertex replacing ``begins'' with ``ends'' in the statement below. Note that $L_f$ and $f$ agree at the vertices of $G$, and so the distinction between $f$ and its linearization is not necessary at a vertex.
\begin{thm}
Let $x\in \Fix(f^m)$ be the initial vertex of $E$ and let $y\in \Fix(L_f^n)$ be a point on the interior of $E$. Also assume that the path $L_f^m(E)$ begins with $E$. Then for any $w^k \in \mathcal S(\rho_{\lift f}(x),\rho_{\lift L_f}(y))$, there is a periodic point $z$ of $f$ on the interior of $E$ with $\rho_{\lift f}(z) = w^k$.
\end{thm}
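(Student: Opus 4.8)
The plan is to deduce this from Theorem~\ref{bothinterior} by replacing the vertex periodic point $x$ with an honest interior periodic point of $L_f$ carrying the same rotation element. First, since the homotopy from $f$ to its linearization $L_f$ is constant on every vertex, the canonical lift $\lift L_f$ agrees with $\lift f$ on every lift of a vertex, and hence (iterating, using that $f$ permutes vertices) $\lift L_f^m(\lift x)=\lift f^m(\lift x)$. Writing $\rho_{\lift f}(x)=\alpha^{1/m}$, this gives $\lift L_f^m(\lift x)=\alpha\lift x$. The claim I would establish is: there is a point $x'$ in the interior of $E$ with $x'\in\Fix(L_f^m)$ and $\rho_{\lift L_f}(x')=\alpha^{1/m}$. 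Once this is known, Theorem~\ref{bothinterior} applied to the two interior periodic points $x'$ and $y$ of $E$ gives exactly the desired conclusion, because $\mathcal S(\rho_{\lift L_f}(x'),\rho_{\lift L_f}(y))=\mathcal S(\rho_{\lift f}(x),\rho_{\lift L_f}(y))$.

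To produce $x'$, I would use the hypothesis that $L_f^m(E)$ begins with $E$. Because $L_f^m$ fixes the initial vertex $x$ of $E$ and the path $L_f^m(E)$ starts by traversing $E$, there is a subinterval $C\subseteq E$ with $x$ as its initial endpoint and $L_f^m(C)=E$; lifting this, $\lift L_f^m(\lift C)=\alpha\lift E$, since $\alpha\lift E$ is the lift of $E$ beginning at $\lift L_f^m(\lift x)=\alpha\lift x$. Consequently the continuous map $g:=\alpha^{-1}\circ\lift L_f^m$ carries $\lift C$ onto $\lift E$, and since $\lift C\subseteq\lift E$ an intermediate value argument (comparing $g$ with the identity at points of $\lift C$ mapped to the two endpoints of $\lift E$) produces a fixed point $\lift{x'}$ of $g$ in $\lift C$, that is, a point $x'$ with $\lift L_f^m(\lift{x'})=\alpha\lift{x'}$, so $x'\in\Fix(L_f^m)$ with $\rho_{\lift L_f}(x')=\alpha^{1/m}$.

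The main obstacle is that the fixed point just produced could be $\lift x$ itself — which is always a fixed point of $g$ — rather than an interior point of $E$. Ruling this out is where the rigidity of linearized maps enters: $L_f$ is linear on each edge with slope equal in magnitude to the length of the image path, hence of integer magnitude at least $1$ whenever that image path is nondegenerate. Assuming no edge is collapsed by the relevant iterate, every linear piece of $L_f^m|_E$ then has slope of magnitude $\ge 1$, and so, reading $g$ along $\lift C$ starting from $\lift x$: either $g$ coincides with the identity on an initial segment (and every point of that segment is an interior fixed point), or $g$ leaves the diagonal with slope $>1$; in the latter case, being bounded by $\lift E$ and unable to have a linear piece of slope $0$, $g$ must eventually turn around, and it then recrosses the diagonal at a point strictly inside $\lift C$, hence in the interior of $E$. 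The degenerate case in which some iterate of $L_f$ collapses an edge would have to be excluded or handled separately. (Alternatively, one could bypass the reduction and instead repeat the induction of Theorem~\ref{bothinterior} verbatim, using the interval $C$ above in place of Lemma~\ref{intervallem} at the vertex $x$ and Lemma~\ref{intervallem} at $y$ as before; the same interiority point would then be the crux.)
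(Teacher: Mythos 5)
Your main line of attack has a genuine gap, and it is exactly at the ``turn around'' step. Write $C$ (as you do) for the initial subinterval of $E$ with $x$ as its left endpoint and $L_f^m(C)=E$, and let $g=\alpha^{-1}\circ\lift L_f^m$ on $\lift C$. Because $C$ is the preimage under $L_f^m$ of the \emph{first} traversal of $E$ in the path $L_f^m(E)$, the map $g|_{\lift C}$ is monotone increasing (it covers $\lift E$ exactly once, in the forward direction, starting from $g(\lift x)=\lift x$). Moreover, as you note, every linear piece of $g$ has integer slope of magnitude at least $1$; combined with monotonicity this means all slopes are $\ge 1$, so $t\mapsto g(t)-t$ is \emph{nondecreasing} along $\lift C$ starting from $0$ at $\lift x$. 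Therefore, if the first piece has slope $>1$, then $g(t)-t>0$ on the whole rest of $C$: the graph of $g$ leaves the diagonal at $\lift x$ and \emph{never} recrosses it. The claimed ``recrossing at a point strictly inside $\lift C$'' cannot happen. So the interior point $x'$ with $\rho_{\lift L_f}(x')=\alpha^{1/m}$ that your reduction relies on need not exist (a linearized degree-one circle map fixing a vertex, expanding with constant slope $3$, has the vertex as its only point of rotation number $0$), and the appeal to Theorem~\ref{bothinterior} with $x'$ in place of $x$ does not go through.

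Your closing parenthetical — rerunning the induction of Theorem~\ref{bothinterior} with your $C$ substituted for the interval that Lemma~\ref{intervallem} would produce at the vertex $x$, while using Lemma~\ref{intervallem} at $y$ as before — is precisely the proof the paper gives (it calls your $C$ by the name $A$). So you identified the correct route, but only as an aside; the argument you actually carry out does not work. I would promote that parenthetical to the main proof: observe that $\lift L_f^m(\lift A)=\alpha\lift E$ and then repeat the inductive step of Theorem~\ref{bothinterior} verbatim, using $A$ whenever the appended letter is $\alpha$.
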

\begin{proof}
Since $L_f^m(E)$ begins with $E$, there will be some initial interval $A\subset E$ with $L_f^m(A) = E$. Since $L_f^m(\lift x) = \alpha \lift x$ and $\lift x$ is the initial endpoint of $\lift A$, we will have $\lift L_f^m(\lift A) = \alpha \lift E$.
Now the proof proceeds exactly as in Theorem \ref{bothinterior}, using $A$ in place of $C$.
\end{proof}

Note that the hypothesis to the above will always hold in particular if $x$ is a degree 1 vertex.

\section{Fixed points and the paths $\lift P_i$}\label{fpsection}
Given two periodic points $x$ and $y$ with rotation elements $\rho(x)$ and $\rho(y)$, it is natural to ask about the existence of other periodic points and rotation elements. The previous section looked at the case when $x$ and $y$ belong to the interior of an edge. In this and the next section we consider the case when $x$ and $y$ are the two endpoints of a given edge. We will not need to worry about the disctinction between $f$ and $L_f$ in these sections, since these functions agree on the vertices.

We will see in this section that we cannot in general expect an analogue of Theorem \ref{bothinterior} to hold when both $x$ and $y$ are vertices. In Example \ref{nointerval} we will show a situation where $x$ and $y$ are periodic points at vertices, but their edge contains only a single other periodic point (which is fixed). This is in contrast to the infinite set of periodic points which would  be implied by an analogue of Theorem \ref{bothinterior}.






In what follows we will choose some edge $E \in G$ with initial vertex $V_1$ and terminal vertex $V_2$. We will assume $V_1$ is a periodic point with period $m$ and rotation element $w_1^{1/m}$ and that $V_2$ is a periodic point with period $n$ and rotation element $w_2^{1/n}$. We will choose the edge $E$ to be part of the spanning tree of $G$ used to construct the coherent labeling. This means that  $\lift E$ has initial vertex $\lift V_1$ and terminal vertex  $\lift V_2$.

Given a positive integer $k$ we consider the path in $\lift G$ corresponding to $\lift f^{kmn} (\lift E)$. It is clear that this is a path that goes from $\lift f^{kmn} (\lift V_1)=w_1^{nk}\lift V_1$ to $\lift f^{kmn} ( \lift  V_2)= w_2^{mk} \lift  V_2$. The strategy in the following is to find edges in this path that project onto $E$ and then use the following lemma to deduce that periodic points exist with given rotation elements. 

\begin{lem}
Let $\lift E \in \lift G$. If the path  $\lift f^n (\lift E)$ contains the edge $\gamma \lift E$, then there exists a point $x \in E$ with rotation element $\gamma^{1/n}$.

\end{lem}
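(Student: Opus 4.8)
The statement to prove is: if the path $\lift f^n(\lift E)$ contains the edge $\gamma \lift E$, then there is a point $x \in E$ with rotation element $\gamma^{1/n}$. The strategy is essentially the same argument used in the second half of Lemma \ref{linearizationlem} and in Lemma \ref{intervallem}: extract a subinterval of $\lift E$ that maps exactly onto $\gamma \lift E$, then project down to $G$ and apply the intermediate value theorem to locate a periodic point whose lift behaves as required.

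First I would observe that $\lift f^n(\lift E)$ is a path in the simply connected graph $\lift G$, and that the edge $\gamma\lift E$ lies on this path. I would then argue that there is a subinterval $\lift J \subseteq \lift E$ with $\lift f^n(\lift J) = \gamma \lift E$: since the path $\lift f^n(\lift E)$ traverses $\gamma\lift E$ as one of its edges, the preimage under $\lift f^n$ of the open edge $\gamma\lift E$ inside $\lift E$ contains at least one component that maps onto it, and taking the closure of such a component gives the interval $\lift J$ with endpoints mapping to the two endpoints of $\gamma\lift E$. (This is where one uses that $\lift G$ is a tree, so the path cannot ``double back'' on $\gamma\lift E$ in a way that prevents a clean preimage interval — the same subtlety handled in Lemma \ref{linearizationlem}.)

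Next I would project into $G$. Writing $p$ for the covering projection and $J = p(\lift J) \subseteq E$, the equality $\lift f^n(\lift J) = \gamma \lift E$ projects to $f^n(J) = E$. Since $f^n|_J$ is a continuous surjection of the interval $J$ onto the interval $E \supseteq J$, by the intermediate value theorem there is a point $x \in J$ with $f^n(x) = x$. It then remains to check that the lift $\lift x$ (the canonically chosen lift lying in $\lift E$, since $x \in E$) satisfies $\lift f^n(\lift x) = \gamma \lift x$: this follows because $\lift x \in \lift J$, so $\lift f^n(\lift x) \in \gamma \lift E$, and $\lift f^n(\lift x)$ is a lift of $f^n(x) = x$; the unique lift of $x$ lying in $\gamma \lift E$ is $\gamma \lift x$, hence $\lift f^n(\lift x) = \gamma \lift x$. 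By Definition \ref{roteltdefn} this says $\rho(x) = \gamma^{1/n}$.

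The main obstacle is the extraction of the subinterval $\lift J$ with $\lift f^n(\lift J) = \gamma\lift E$ exactly (not merely containing or contained in $\gamma\lift E$). One has to be slightly careful that the chosen lift $\lift x$ is genuinely the canonical one associated with $x \in E$ rather than some covering translate, and that the fixed point produced lies in the interior of $E$ if that is needed downstream — but for the statement as given, any $x \in E$ suffices, so locating a fixed point of $f^n$ inside $J$ and identifying its lift is all that is required.
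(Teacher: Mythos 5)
Your proposal follows essentially the same argument as the paper: extract a subinterval $\lift J \subseteq \lift E$ with $\lift f^n(\lift J) = \gamma\lift E$, project to get $f^n(J) = E$, apply the intermediate value theorem to locate a fixed point $x$ of $f^n$ in $J$, and then read off $\lift f^n(\lift x) = \gamma\lift x$. You spell out the final identification of the lift in a bit more detail than the paper, but the structure and key ideas are identical.
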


\begin{proof}
If the path  $\lift f^n (\lift E)$ contains the edge $\gamma \lift E$, then there must be a closed subinterval $\lift J$ of $\lift E$ with $\lift f^n(\lift J)=\gamma \lift E$. Projecting to the graph gives $f^n(J)=E$. Since $J$ is a closed subinterval contained in $E$, there must be a point $x \in E$ with $f^n(x)=x$. Then $\lift f^n (\lift x)=\gamma \lift x$ and so $\rho(x)=\gamma^{1/n}$.
\end{proof}

Since the covering space is a tree there is a unique shortest path between any two vertices. Given any path between vertices in the universal cover we say that the {\em contraction} of the path is the shortest path connecting the two vertices.

Let $\lift P_1$ denote the shortest path from $\lift V_1$ to $\lift w_1V_1$, and $\lift P_2$ denote the shortest path from $V_2$ to $w_2V_2$. More generally let $\lift P_1^k$ denote the shortest path from $\lift V_1$ to $w_1^k \lift V_1$, and $\lift P_2^k$ denote the shortest path from $ V_2$ to $w_2^k  V_2$. Let $\lift P_1^{-k}$ denote the reverse of $\lift P_1^k$. The following lemma follows immediately.

\begin{lem} 

The contraction of the path $\lift P_1^{-kn}\lift E \lift P_2^{km}$ is equal to the contraction of the path  $\lift f^{kmn} (\lift E)$.

\end{lem}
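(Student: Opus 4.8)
The plan is to show that the path $\lift f^{kmn}(\lift E)$ and the concatenation $\lift P_1^{-kn}\,\lift E\,\lift P_2^{km}$ have the same endpoints and then invoke uniqueness of contractions in a tree. First I would identify the endpoints of $\lift f^{kmn}(\lift E)$: since $\lift E$ runs from $\lift V_1$ to $\lift V_2$ and $f$ is homotopic to the identity, the labeling scheme gives $\lift f^{kmn}(\lift V_1) = w_1^{nk}\lift V_1$ and $\lift f^{kmn}(\lift V_2) = w_2^{mk}\lift V_2$, so this path goes from $w_1^{nk}\lift V_1$ to $w_2^{mk}\lift V_2$. (This is exactly the observation made in the text just before the lemma statements.)

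Next I would check that the concatenated path $\lift P_1^{-kn}\,\lift E\,\lift P_2^{km}$ is actually a well-defined path and compute its endpoints. By definition $\lift P_1^{kn}$ runs from $\lift V_1$ to $w_1^{kn}\lift V_1$, so its reverse $\lift P_1^{-kn}$ runs from $w_1^{kn}\lift V_1$ to $\lift V_1$; then $\lift E$ continues from $\lift V_1$ to $\lift V_2$; and $\lift P_2^{km}$ continues from $\lift V_2$ to $w_2^{km}\lift V_2$. The interior concatenation points match up ($\lift V_1$ and $\lift V_2$), so this is a genuine path from $w_1^{kn}\lift V_1$ to $w_2^{km}\lift V_2$ — the same endpoints as $\lift f^{kmn}(\lift E)$. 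Here I should be a little careful about the convention in Remark \ref{piordering}: $w_1^{nk}\lift V_1$ and $w_1^{kn}\lift V_1$ denote the same vertex since the exponents are equal, so there is no ordering subtlety beyond noting $nk = kn$.

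Finally, since $\lift G$ is a tree, any two paths with the same pair of endpoints have the same contraction (the unique reduced/shortest path between those endpoints). Applying this to the two paths above gives the claim. I do not expect any real obstacle here; the only points requiring mild care are (a) verifying that the concatenation is legitimately defined, i.e. the edge $\lift E$ and the chosen lifts $\lift P_i^k$ share the appropriate endpoints, which is immediate from the coherent labeling and the hypothesis that $E$ lies in the spanning tree (so $\lift E$ genuinely connects $\lift V_1$ to $\lift V_2$), and (b) making sure the exponents $w_1^{nk}$ versus $w_1^{kn}$ and $w_2^{mk}$ versus $w_2^{km}$ are recognized as equal. Everything else is a direct appeal to tree uniqueness, which is why the text can reasonably say the lemma "follows immediately."
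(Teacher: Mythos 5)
Your argument is exactly the paper's: both paths share the endpoints $w_1^{nk}\lift V_1$ and $w_2^{mk}\lift V_2$, and since $\lift G$ is a tree the contraction is the unique reduced path between those endpoints. The additional bookkeeping you supply (verifying the concatenation is well-defined and that the exponent orderings agree) is a correct elaboration of the one-line proof in the text.
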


\begin{proof}
Both paths go from  $w_1^{nk}\lift V_1$ to $ w_2^{mk}\lift  V_2$.
\end{proof}

The previous two lemmas give the following which will be used throughout.

\begin{lem}\label{roteltsinpathlem}

If $\gamma \lift E$ is in the contraction of $\lift P_1^{-kn}\lift E \lift P_2^{km}$, then there exists an $x \in E$ with rotation element $\gamma^{1/kmn}$.
\end{lem}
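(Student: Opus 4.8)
The plan is to deduce this lemma immediately by chaining the three preceding lemmas, since all the real work has already been done. The statement asserts that if $\gamma \lift E$ appears in the contraction of $\lift P_1^{-kn}\lift E \lift P_2^{km}$, then $E$ contains a point $x$ with $\rho(x)=\gamma^{1/kmn}$.

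First I would invoke the lemma asserting that the contraction of $\lift P_1^{-kn}\lift E \lift P_2^{km}$ equals the contraction of $\lift f^{kmn}(\lift E)$. These two paths have the same endpoints (namely $w_1^{nk}\lift V_1$ and $w_2^{mk}\lift V_2$, using that $V_1,V_2$ are periodic with the stated rotation elements and that $\lift E$ is the coherent lift of a spanning-tree edge), and in the simply connected graph $\lift G$ two paths with common endpoints have the same contraction. Hence $\gamma\lift E$ lies in the contraction of $\lift f^{kmn}(\lift E)$. Since a contraction is a subpath of the original reduced-up-to-backtracking path, $\gamma\lift E$ must also appear in the path $\lift f^{kmn}(\lift E)$ itself: any edge surviving the cancellation of backtracks is traversed by the original path.

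Next I would apply the earlier lemma which says that if $\lift f^N(\lift E)$ contains the edge $\gamma\lift E$, then there is a point $x\in E$ with rotation element $\gamma^{1/N}$; taking $N = kmn$ gives the desired point $x\in E$ with $\rho(x)=\gamma^{1/kmn}$. That lemma's proof extracts a closed subinterval $\lift J\subset\lift E$ with $\lift f^{kmn}(\lift J)=\gamma\lift E$, projects to get $f^{kmn}(J)=E$ with $J\subseteq E$ closed, and applies the intermediate value theorem to locate a fixed point of $f^{kmn}$ in $J$, whose lift then satisfies $\lift f^{kmn}(\lift x)=\gamma\lift x$.

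The only point that requires a moment's care — and the one I would state explicitly rather than gloss over — is that an edge occurring in the contraction of a path also occurs in the original path. This is true because contraction only removes immediately backtracking segments (an edge immediately followed by its own reverse), so every edge of the contracted path is literally one of the edges traversed by the original path; in particular the hypothesis ``$\gamma\lift E$ is in the contraction of $\lift P_1^{-kn}\lift E\lift P_2^{km}$'' transfers, via the equality of contractions, to ``$\gamma\lift E$ is in $\lift f^{kmn}(\lift E)$,'' which is exactly the hypothesis needed for the edge-detection lemma. No other obstacle arises; the argument is a two-line composition of results already proved.
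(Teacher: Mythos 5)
Your argument is correct and is exactly the route the paper intends: Lemma \ref{roteltsinpathlem} is stated in the paper as an immediate consequence of the two preceding lemmas, and you have simply supplied the connecting sentences, including the (correct) observation that an edge surviving in a reduced path must already appear in the unreduced path.
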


We will say that a path begins with $\lift E$ if the initial edge in the path is either $\lift E$ or $\lift E^{-1}$. Thus saying that $\lift P_1$ begins with $\lift E$ means that the first edge is $\lift E$, but saying that $\lift P_2$ begins with $\lift E$ means that the first edge is $\lift E^{-1}$.

There are three general cases according to which of the paths $\lift P_i$ begin with $\lift E$, which we treat separately in three subsections.

\subsection{Neither or both of $\lift P_1$ nor $\lift P_2$ begin with $\lift E$.} 

\begin{thm} \label{fpthm}
If neither $\lift P_1$ nor $\lift P_2$ begins with $\lift E$, then $E$ contains a fixed point $x$ with $\rho(x) = 1$.
\end{thm}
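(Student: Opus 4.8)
The plan is to analyze the path $\lift f^{mn}(\lift E)$ using the previous lemma: by Lemma~\ref{roteltsinpathlem} (with $k=1$) it suffices to show that $\lift E$ itself — rather than some $\gamma\lift E$ with $\gamma\neq 1$ — appears in the contraction of $\lift P_1^{-n}\lift E\lift P_2^{m}$, since then there is an $x\in E$ with rotation element $1^{1/mn}=1$, i.e.\ a fixed point with trivial rotation element. So the whole proof reduces to a statement about when a specific edge survives in a contracted path in the tree $\lift G$.

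First I would set up the contraction carefully. The path $\lift P_1^{-n}\lift E\lift P_2^{m}$ is a concatenation of three pieces: $\lift P_1^{-n}$ runs from $w_1^{n}\lift V_1$ back to $\lift V_1$; then $\lift E$ runs from $\lift V_1$ to $\lift V_2$; then $\lift P_2^{m}$ runs from $\lift V_2$ to $w_2^{m}\lift V_2$. Each of $\lift P_1^{-n}$ and $\lift P_2^{m}$ is already a geodesic (a ``shortest path'') by definition. When we contract the concatenation, cancellation can only occur at the two junction vertices $\lift V_1$ and $\lift V_2$: at $\lift V_1$, the last edge of $\lift P_1^{-n}$ might coincide with $\lift E$ (traversed in the opposite sense), in which case the $\lift E$ factor would be cancelled; similarly at $\lift V_2$, the first edge of $\lift P_2^{m}$ might coincide with $\lift E^{-1}$. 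The hypothesis ``$\lift P_1$ does not begin with $\lift E$'' says exactly that the first edge of $\lift P_1$ is not $\lift E$; reversing, the last edge of $\lift P_1^{-1}$ — hence of $\lift P_1^{-n}$, since $\lift P_1^{-n}$ ends by traversing $\lift P_1^{-1}$ — is not $\lift E$ (it is not $\lift E^{\pm 1}$, since $\lift P_1$ emanates from $\lift V_1$ and the only edge from $\lift V_1$ that could cancel with $\lift E$ is $\lift E$ itself). Thus no cancellation happens at $\lift V_1$ against $\lift E$. Symmetrically, ``$\lift P_2$ does not begin with $\lift E$'' (meaning its first edge is not $\lift E^{-1}$, in the convention just introduced) ensures no cancellation at $\lift V_2$ against $\lift E^{-1}$. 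Therefore the $\lift E$ edge is not cancelled when we form the contraction, so $\lift E$ lies in the contracted path, and Lemma~\ref{roteltsinpathlem} gives the fixed point $x\in E$ with $\rho(x)=1$.

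The main obstacle — and the point that needs the most care — is making rigorous the claim that ``cancellation can only occur at the junctions, and a single edge $\lift E$ squeezed between two geodesics is either killed by one of its immediate neighbors or survives entirely.'' One must rule out the scenario where $\lift E$ is cancelled in a more global way (e.g.\ the whole of $\lift E$ plus a chunk of $\lift P_2^m$ folds back into $\lift P_1^{-n}$). Because $\lift G$ is a tree, the contraction of any path between two fixed endpoints is the unique geodesic, and a clean way to argue is: after cancelling all backtracks within $\lift P_1^{-n}$ (none, it is already reduced) and within $\lift P_2^{m}$ (none), the concatenation $\lift P_1^{-n}\cdot \lift E\cdot\lift P_2^{m}$ is reduced as written provided the edge pairs at the two seams do not backtrack — and that is precisely what the hypothesis guarantees. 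A reduced edge-path in a tree is already geodesic, so it equals its own contraction, and in particular contains $\lift E$. I would present this via the standard fact that a path in a tree with no two consecutive inverse edges is geodesic, applied after checking the two seam conditions from the hypotheses.
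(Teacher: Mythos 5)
Your proposal is correct and is essentially the argument the paper gives: the paper simply asserts that the contraction of $\lift P_1^{-n}\lift E\lift P_2^{m}$ must still contain $\lift E$ and then invokes the periodic-point lemma, while you supply the details (reduced-path/seam analysis in the tree) justifying that assertion. One small remark: strictly speaking Lemma~\ref{roteltsinpathlem} produces a point with $f^{mn}(x)=x$ and $\rho(x)=1^{1/mn}$, and the step from this to ``$x$ is a fixed point of $f$'' is made without further justification in your write-up --- but this is exactly the same leap the paper's own proof makes, so your argument is faithful to it.
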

\begin{proof}
 The contraction of $\lift P_1^{-n}\lift E \lift P_2^{m}$ must still contain $\lift E$. So $\lift f$ must have a fixed point $\lift x$  in $\lift E$. Thus $\lift f(\lift x) = \lift x$, and so $\rho(x) = 1^1$.
\end{proof}

\begin{thm} \label{fpboth}
If both $\lift P_1$ and $\lift P_2$ begin with $\lift E$, then $E$ contains a fixed point $x$ with $\rho(x)=1$.
\end{thm}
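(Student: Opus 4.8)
```latex
The plan is to use Lemma \ref{roteltsinpathlem} with $k=1$, exactly as in the proof of Theorem \ref{fpthm}, and show that the contraction of $\lift P_1^{-n}\lift E \lift P_2^{m}$ still contains $\lift E$. The issue is that, unlike the ``neither'' case, here both $\lift P_1$ and $\lift P_2$ \emph{do} begin with $\lift E$ (meaning $\lift P_1$ starts with the edge $\lift E$ and $\lift P_2$ starts with $\lift E^{-1}$, per the convention stated just before the theorem), so there is genuine cancellation to worry about at both ends of the concatenation $\lift P_1^{-n}\lift E \lift P_2^{m}$. The goal is to argue that these two cancellations cannot combine to consume $\lift E$ itself.

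First I would set up the concatenation carefully. Write $\lift P_1 = \lift E \cdot Q_1$, where $Q_1$ is the remainder of the shortest path from $\lift V_1$ to $w_1\lift V_1$ after its initial edge $\lift E$. Reversing, $\lift P_1^{-1} = Q_1^{-1}\cdot \lift E^{-1}$, and since $\lift P_1$ is reduced (it is a shortest path in a tree), $Q_1^{-1}$ does not end with $\lift E$ — equivalently, $Q_1$ does not begin with $\lift E^{-1}$. Similarly write $\lift P_2 = \lift E^{-1}\cdot Q_2$, with $Q_2$ not beginning with $\lift E$. Now $\lift P_1^{-n}\lift E\lift P_2^{m}$ is, after the first obvious cancellation, of the form $(\text{stuff ending in } \lift E^{-1})\cdot \lift E \cdot (\text{stuff})$: the central $\lift E$ cancels against the final $\lift E^{-1}$ of $\lift P_1^{-n}$, leaving $\lift P_1^{-n}$'s penultimate edge adjacent to the start of $\lift P_2^{m}$. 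So rather than fight the central cancellation, the right move is to observe that the whole path $\lift P_1^{-n}\lift E\lift P_2^{m}$ has the same contraction as $\lift P_1^{-n}\lift P_2'^{m}$ where $\lift P_2' = \lift P_2$ with its leading $\lift E^{-1}$ already cancelled into the $\lift E$... I will instead reorganize: the central $\lift E$ together with the $\lift E^{-1}$ that begins $\lift P_2$ cancel, so $\lift E\lift P_2^m$ has the same contraction as $Q_2\lift P_2^{m-1}$, a path from $\lift V_1$ (via the start of $\lift E$) ... this is getting delicate, so the cleanest route is a direct distance/geodesic argument.

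So the key step I expect to carry the proof: work in the tree $\lift G$ with its graph metric and use the fact that a reduced edge-path between two vertices is the unique geodesic. Let $A = w_1^{n}\lift V_1$ and $B = w_2^{m}\lift V_2$ be the endpoints of our path. I want to show the geodesic $[A,B]$ contains the edge $\lift E$, whose endpoints are $\lift V_1$ and $\lift V_2$. It suffices to show $\lift V_1$ and $\lift V_2$ both lie on $[A,B]$, and in the order $A,\lift V_1,\lift V_2,B$. For the first: $[A,\lift V_1]$ is the reverse of $\lift P_1^{n}$ (contracted), which \emph{begins with $\lift E^{-1}$} at the $\lift V_1$ end? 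No — $\lift P_1^n$ \emph{ends} near $A$; the geodesic from $\lift V_1$ to $A$ is the contraction of $\lift P_1^n$, and since $\lift P_1$ begins with $\lift E$, a standard argument about concatenating reduced paths (the contraction of $\lift P_1^n$ still begins with $\lift E$, because cancellation between consecutive copies of $\lift P_1$ happens only at the far end) shows the geodesic $[\lift V_1, A]$ leaves $\lift V_1$ through the edge $\lift E$, i.e. passes through $\lift V_2$'s side... hmm. I would instead argue: the geodesic $[\lift V_1,A]$ begins with $\lift E$ (hence passes through $\lift V_2$) and the geodesic $[\lift V_2,B]$ begins with $\lift E^{-1}$ (hence passes through $\lift V_1$); a point lying on both $[\lift V_1,A]$ and $[\lift V_2,B]$, together with tree convexity, forces $\lift E \subset [A,B]$. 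Concretely: since $[\lift V_1,A]$ starts with $\lift E$, we have $d(\lift V_2,A) = d(\lift V_1,A)-1$; since $[\lift V_2,B]$ starts with $\lift E^{-1}$, we have $d(\lift V_1,B)=d(\lift V_2,B)-1$; adding gives $d(\lift V_2,A)+d(\lift V_1,B) = d(\lift V_1,A)+d(\lift V_2,B)-2$. In a tree this identity is exactly the condition that the edge $\lift E=[\lift V_1,\lift V_2]$ lies on the geodesic $[A,B]$ (equivalently the four-point condition / median argument). Then by Lemma \ref{roteltsinpathlem}, $\lift E$ itself appears in the contraction of $\lift P_1^{-n}\lift E\lift P_2^m$, so there is $x\in E$ with $\rho(x) = 1^{1/(mn)} = 1$, and $x$ is fixed by $f$.

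The main obstacle, then, is purely the tree-geometry bookkeeping in the previous paragraph: proving that the contraction of $\lift P_1^{n}$ still begins with $\lift E$ given that $\lift P_1$ does (so that the leading edge survives $n$-fold concatenation), and likewise for $\lift P_2^m$, and then converting ``$[\lift V_1,A]$ starts with $\lift E$ and $[\lift V_2,B]$ starts with $\lift E^{-1}$'' into ``$\lift E\subset[A,B]$'' via the additive distance identity in a tree. Everything after that is an immediate appeal to Lemma \ref{roteltsinpathlem} with $\gamma = 1$, $k=1$.
```
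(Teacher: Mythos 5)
Your proof reaches the same conclusion by the same overall route as the paper --- show that the contracted path from $w_1^n\lift V_1$ to $w_2^m\lift V_2$ still traverses $\lift E$, then invoke Lemma~\ref{roteltsinpathlem} with $k=1$ and $\gamma$ trivial --- but you replace the paper's one-sentence cancellation assertion with a tree-metric argument: the endpoints $A=w_1^n\lift V_1$ and $B=w_2^m\lift V_2$ lie on opposite sides of $\lift E$ in the tree, so the unique geodesic $[A,B]$ must cross $\lift E$. This ``opposite sides'' formulation is genuinely cleaner than tracking cancellations at both ends of $\lift P_1^{-n}\lift E\lift P_2^m$, which your own abandoned first attempt shows to be delicate; the distance identity $d(\lift V_2,A)+d(\lift V_1,B)=d(\lift V_1,A)+d(\lift V_2,B)-2$ does encode exactly the right separation condition, though you could state the same thing more simply by saying $A$ and $B$ lie in the two different components of $\lift G$ with the interior of $\lift E$ removed.

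The one place where you lean on an unverified fact is the claim that the geodesic $\lift P_1^n$ still begins with $\lift E$ whenever $\lift P_1$ does, and likewise for $\lift P_2^m$. Your parenthetical --- ``cancellation between consecutive copies of $\lift P_1$ happens only at the far end'' --- is a slogan rather than a proof: concatenating the contracted $\lift P_1^{n-1}$ with $w_1^{n-1}\lift P_1$ certainly does create cancellation at the junction $w_1^{n-1}\lift V_1$, and one must argue it never backs up far enough to consume the initial edge. The clean justification is that $w_1$ (nontrivial, since $\lift P_1$ is nonempty) acts on the tree $\lift G$ as a fixed-point-free, hence hyperbolic, isometry; for any vertex $v$ off the axis the geodesics $[v,w_1^nv]$ all leave $v$ toward the projection of $v$ onto the axis, and if $v$ is on the axis they all leave $v$ in the translation direction --- either way the first edge is the same for every $n\ge 1$. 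The paper does not spell this out either, so you are not worse off than the source, but since you explicitly flag it as ``the main obstacle'' it is worth recording that the obstacle is a real (if standard) lemma about tree isometries, not just bookkeeping.
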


\begin{proof}
The first edge in $\lift P_1$ is $\lift E$ and the first edge of   $\lift P_2$  is $\lift E^{-1}$.
The contraction of $\lift P_1^{-n}\lift E \lift P_2^{m}$ must still contain $\lift E^{-1}$. So $\lift f$ must have a fixed point $\lift x$  in $\lift E$. 
\end{proof}

Note that the fixed points called $x$ in the two above theorems may not be on the interior of $E$. It is possible that one of $V_1$ or $V_2$ is fixed, and this is the only fixed point in $E$. As an example, consider the closed interval $[0,1]$ to be an edge, and the map $f$ restricted to this edge is given by $f(x)=x^2$. The two vertices given by $0$ and $1$ are fixed, neither $\lift P_1$ nor $\lift P_2$ contain $E$ (they are both trivial), but the only fixed points are the vertices.


\begin{exl}\label{nointerval}
Consider the graph given in Figure \ref{housegraph}, and a map with vertex permutation given by $(1,2,3,4,5)$. We will construct $f$ to be homotopic to the identity, and define $f$ according to the path the homotopy gives going from $V_i$ to $f(V_i)$: $V_1$ goes to $V_2$ along $E_2$. $V_2$ goes to $V_3$ along $E_4$. $V_3$ goes to $V_4$ along $E_5$. $V_4$ goes to $V_5$ along $E_6$. $V_5$ goes to $V_1$ along $E_1$. With the labeling as given in that section $\lift V_2$ has rotation element $(ba)^{1/5}$ and $\lift V_5$ has rotation element $(ab)^{1/5}$. 
It is also clear that the only periodic point in $E_3$ given by the linear map is a fixed point in the interior of $E_3$.
\end{exl}

\begin{exl}\label{newedge}
More generally, let $G$ be a graph that is not complete. Let $f:G\to G$ be a map that is homotopic to the identity. Find two vertices $V_i$ and $V_j$ with no edge connecting them. There is a path from $V_i$ to $f(V_i)$ given by the homotopy. Denote this path by $Q_i$. Let $Q_j$ be the corresponding path from $V_j$ to $f(V_j)$. Then construct a new edge $E$ from $V_i$ to $V_k$. Call this new graph $G'$. Define $f^\prime : G^\prime \to G^\prime$ by $f^\prime$ restricted to $G$ is $f$ and $f'(E)=Q_i^{-1}EQ_j$. This new map is homotopic to the identity. Neither  $\lift P_i^k$ nor $\lift P_j^k$ start with $\lift E$.
The only periodic points that have been added going from $f$ to $f^\prime$ are possible fixed points in $E$.
\end{exl}

The above shows that we cannot expect a simple generalization of Theorem \ref{bothinterior} when $x$ and $y$ are vertices of $E$. In these cases we have nontrivial rotation elements $\rho(x)$ and $\rho(y)$, but no rotation elements in $\mathcal S(\rho(x), \rho(y))$.

\subsection{Exactly one of $\lift P_1$, $\lift P_2$ begins with $\lift E$.}

Our techniques yield no useful results in this case. To illustrate, 
we give an example to show that there can be infinitely many periodic points with different rotation elements for this case.
\begin{center}
	\begin{tikzpicture}[auto]

		\node[vertex, red] (one)   at (0,0)    [label=left :$V_1$] {};
		\node[vertex, red] (two)   at (2,0)  [label=above:$V_2$] {};
		\node[vertex, red] (three) at (4,0)    [label=above:$V_3$] {};

		\draw[oriented edge] (one)   to node[blue]      {$E_1$} (two);
		\draw[oriented edge] (two)  to  [bend left]  node[blue]     {$E_2$} (three);
		\draw[oriented edge] (three)   to [bend left] node[blue]       {$E_3$} (two);

	\end{tikzpicture}
\end{center}
\begin{exl}\label{notcont}
Consider the graph pictured above. The map $f$ interchanges $V_1$ and $V_3$ and fixes $V_2$. The map is homotopic to the identity and sends $V_1$ to $f(V_1)=V_3$ along the path $E_1E_2$. The homotopy gives $E_3E_1^{-1}$ as the path from $V_3$ to $f(V_3)=V_1$. Let $a$ be the generator of the fundamental group. The $\rho(V_1)=\rho(V_3)=a^{1/2}$ and $\rho(V_2)=a^0$. Notice that 
the path $\lift P_1$ begins with $\lift E_1$ and ends with the edge $a^k \lift E_1^{-1}$, and these are the only two edges that project onto $E_1$. The path $\lift P_2$ is the empty path. So the only edge in the contracted path $\lift P_1^{-k}\lift E_1 \lift P_2^k$ is $a^k \lift E_1^{-1}$. So in this case our techniques from this section tell us nothing about periodic points with rotation elements different from $(a^k)^{1/2k}=a^{1/2}$. 

However, we will see that edge $E_3$ satisfies the hypotheses of Theorem \ref{cfrotint} and so has periodic points with rotation elements of the form $a^q$ for all rationals satisfying $0< q < 1/2$. Since the image of every point in $E_3$ is in $E_1$, it must be the case that $E_1$ also has periodic points with rotation elements of the form $a^q$ for all rationals satisfying $0< q < 1/2$.
\end{exl}

\section{Rotation elements for vertices of an edge}\label{belongsection}
In this section we continue the theme of deducing certain periodic points and rotation elements based on the rotation elements of vertices of an edge. Unlike the previous section the theorems in this section will always imply infinitely many periodic points on the interior of the edge in question. So it is the case that the results of Section \ref{interiorsection} can be used to extend the results of this, since any two interior periodic points will imply still more periodic points by Theorem \ref{bothinterior}.

Our results in this section use a natural condition on an edge which detects whether or not that edge ``belongs'' to the loop representing certain fundamental group elements.

\subsection{Belonging}
Call an edge loop $P$ in $G$ \emph{cyclically reduced} when the first edge of $P$ is not equal to the reverse of the last edge. Every loop $P$ can be contracted onto a cyclically reduced loop by changing the basepoint to another vertex of $P$ and canceling adjacent inverse edges. We call the resulting loop a \emph{cyclic reduction} of $P$. 

\begin{defn}
Let $E$ be an edge of $G$ and $\alpha \in \pi$. We say that $E$ \emph{belongs} to $\alpha$ when every cyclic reduction of $\alpha$ uses the edge $E$ or $E^{-1}$.
\end{defn}

Consider as an example the graph in Figure \ref{housegraph} with $\pi = \langle a, b \rangle$ as described in that section. In this case $E_1, E_2, E_3$ all belong to $a$, and $E_3, E_4, E_5, E_6$ belong to $b$. For the graph used in Example \ref{notcont}, the edges $E_2$ and $E_3$ belong to $a$, but $E_1$ does not.

This intuitively clear definition of belonging is not very convenient for our purposes. We will typically use the following more technical statement.

\begin{lem}\label{belongslem}
Let $V$ be a vertex of the edge $E$, and let $w\in \pi$. If $E$ belongs to $w$, then there is some initial subword $\gamma$ of $w$ such that, for every $k\ge 0$, the contracted path from $w^k\lift V$ to $w^{k+1}\lift V$ uses the edge $w^j \gamma \lift E$ if and only if $j=k$.
\end{lem}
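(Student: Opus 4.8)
The plan is to unwind the definition of "belongs" in terms of the universal cover and then exploit the tree structure of $\lift G$. First I would pass to a cyclic reduction of $w$: by definition, since $E$ belongs to $w$, every cyclic reduction of $w$ uses $E$ or $E^{-1}$. Fix one cyclic reduction, which amounts to writing $w$ (up to conjugacy, i.e. up to the choice of basepoint on the loop) as $w = \delta \, c \, \delta^{-1}$, where $c$ is a cyclically reduced loop based at some vertex $U$ of the loop, and $\delta$ is the path in $\mathcal T$-plus-one-edge data connecting the basepoint to $U$. Lifting, the contracted path from $\lift V$ to $w\lift V$ factors through $\delta\lift U$, then traverses the lift of the cyclically reduced loop $c$ from $\delta\lift U$ to $(\delta c)\lift U = (w\delta)\lift U$, then returns along $w\delta$ back down to $w\lift V$. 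The key point is that because $c$ is cyclically reduced, concatenating copies of its lift produces no cancellation at the junctions: the contracted path from $\lift U$ to $w^k\lift U$ is literally the concatenation of $k$ translated copies of the (already reduced) lift of $c$. This is where I would need a short argument that cyclic reducedness of $c$ is exactly the condition guaranteeing no cancellation between consecutive copies $c$ and $w c w^{-1}$ — really just that the last edge of $c$ is not the reverse of the first edge of $c$.

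Next I would locate the edge $E$ inside this picture. Since every cyclic reduction of $w$ uses $E$ or $E^{-1}$, the reduced loop $c$ contains the edge $E^{\pm 1}$; say $E^{\pm 1}$ occurs in $c$ after an initial segment, so that the lift of $c$ from $\delta\lift U$ reaches the edge $\gamma\lift E$ (or $\gamma\lift E^{-1}$) for a unique group element $\gamma$, which we take as the claimed initial subword of $w$ — here $\gamma$ is the portion of $w$ traversed before hitting $\lift E^{\pm 1}$. (One should check $E$ occurs exactly once in the reduced $c$; if $G$ is such that $E$ could occur twice in a cyclically reduced loop through it, the statement as phrased is about the first occurrence, and I would phrase $\gamma$ accordingly.) Because the contracted path from $w^k\lift V$ to $w^{k+1}\lift V$ is the translate by $w^k$ of the contracted path from $\lift V$ to $w\lift V$ — using $\lift f$-equivariance, or here just the deck-transformation action, together with the fact that these contractions are unique in the tree — that path uses $w^k\gamma\lift E$. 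The "only if $j=k$" direction follows from the no-cancellation observation: the full contracted path from $\lift V$ to $w^k\lift V$ passes through each of $w^0\gamma\lift E, w^1\gamma\lift E, \dots, w^{k-1}\gamma\lift E$ once and never revisits an edge (it is a geodesic in a tree), so within the single "block" from $w^k\lift V$ to $w^{k+1}\lift V$ only the edge with $j=k$ appears.

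The main obstacle I anticipate is the bookkeeping around the choice of cyclic reduction and basepoint: the element $w$ is a loop at $x_0$, but its cyclic reduction is naturally based elsewhere, and translating "$\gamma$ is an initial subword of $w$" into the coherent-labeling language of the $\theta_i$'s requires care to make sure the group element labeling the lift of $E$ reached along the way is genuinely a prefix of the word $w$ and not merely a prefix up to conjugation. I would handle this by working entirely in $\lift G$: define $\gamma$ directly as the unique element such that $\gamma\lift E^{\pm 1}$ is the first edge projecting to $E$ encountered on the geodesic from $\lift V$ to $w\lift V$, verify this geodesic is the concatenation described above so that $\gamma$ is indeed "read off" as a prefix, and then get all $k$ at once by applying the deck transformation $w^k$. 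The cancellation-free concatenation of geodesics is the one genuinely substantive step; everything else is translation between the combinatorial and covering-space pictures.
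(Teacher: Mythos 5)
Your plan is correct and, at the top level, follows the same strategy as the paper: lift a representative of $w$ based at $V$ to a reduced path $\lift P$ in the tree $\lift G$, read off a chain of prefixes of $w$ as the group elements labeling the edges of $\lift P$, identify $\gamma$ with one of these prefixes, and then argue using uniqueness of geodesics in $\lift G$. But you get to the crucial facts by a genuinely different route. The paper works entirely with the sequence $\sigma_i$ such that $\lift P = \lift E_1(\sigma_2\lift E_2)\dots(\sigma_l\lift E_l)$, argues from the coherent labeling that these are a weakly growing chain of initial subwords of $w$, sets $\gamma=\sigma_i$ for an $i$ with $E_i=E$, and then rules out $j>0$ by a length count on initial subwords. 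You instead pass through the explicit factorization $w=\delta c\delta^{-1}$ with $c$ cyclically reduced and insist on locating $E$ inside the $c$-segment, then appeal to the translation-axis picture for $w$ acting on $\lift G$. This buys you something: if $E$ also appears in $\delta$, then $\lift P$ contains $E$-edges in both the $\delta$-leg and the $\delta^{-1}$-leg, and those occurrences are related to each other by multiplication by $w$, so a $\gamma$ taken from a leg would break the ``only if'' at $j=\pm 1$; your choice of $\gamma$ from $c$ avoids this, and your geodesic/axis observation also disposes of negative $j$, whereas the paper's length count is phrased only for $j>0$. So while the spirit is the same, your decomposition makes explicit the choice of occurrence that the paper leaves implicit. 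The remaining step to nail down is the one you flag yourself: that for each $k$ the geodesic from $\lift V$ to $w^k\lift V$ decomposes cleanly as spur, $k$ translated copies of the lift of (a cyclic permutation of) $c$, spur, so that $w^j\gamma\lift E$ lies in exactly one of the $k$ blocks and nowhere in the spurs; that is where the cyclic reducedness of $c$, plus the observation that the spur legs carry no copy of $w^j\gamma\lift E$ when $\gamma$ comes from $c$, must be invoked precisely.
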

\begin{proof}
For simplicity, we prove the lemma in the case when $k=0$. Let $P$ be the cyclically reduced loop with base point $V$ represented by $w$, and let $\lift P$ be the lift of $P$ beginning at $\lift V$. This path is the fully contracted path from $\lift V$ to $w\lift V$, and so is the path referred to in the statement of the lemma. To obtain the general case when $k\neq 0$ we use the same argument as follows, but $\lift P$ should be defined as the lifting of $P$ beginning at $w^k\lift V$.

We will need to examine the edges used in the path $P$: let $P = E_1 \dots E_l$, and so we can write $\lift P = \lift E_1 (\sigma_2\lift E_2) \dots (\sigma_l \lift E_l)$. Our coherent labeling of the edges of $\lift G$ was constructed so that each edge $\lift E_i$ connects vertices of the form $\lift V_j$ to $\theta_i \lift V_k$ where $\theta_i \in \pi$ is a single letter or trivial. Those $\theta_i$ which are not trivial are all distinct. Thus each element $\sigma_i$ is obtained from $\sigma_{i-1}$ by possibly adding a single letter on the right. Since $\lift P$ is contracted this single added letter will never cancel with part of $\sigma_i$, since this would require $P$ to traverse an edge followed by its own inverse.

Thus the words $\sigma_i$ form a sequence of growing words, each term either equaling the previous or adding a single letter on the right. Since $\lift P$ ends at $w\lift V$, we can let $\sigma_{l+1} = w$ and still the sequence of $\sigma_i$ from $i=2$ to $i=l+1$ is a growing sequence of words, whose final word is $w$. Thus each $\sigma_i$ is an initial subword of $w$.

Since $E$ belongs to $w$, some particular edge $E_i$ must be the edge $E$, and then letting $\gamma = \sigma_i$ shows that $\lift P$ uses the edge $\gamma \lift E$  as desired. Note also that it is impossible for any $\sigma_l$ to equal $w^j\gamma$ for any $j>0$, since the $\sigma_l$ must all be initial subwords of $w$. The only initial subword of $w$ which can be written as $w^j\gamma$ is $\gamma$ itself. Thus $\lift P$ uses the edge $w^j\gamma\lift E$ if and only if $j=0$, as desired.
\end{proof}

The above can be stated in terms of the paths $\lift P_i$ as follows:
\begin{lem}\label{Pbelongslem}
Let $V_j$ be a vertex of $E$ and assume that $E$ belongs to $w\in \pi$. If $\rho(V_j)=w^{1/n}$ then there is some initial subword $\gamma$ of $w$ such that $\lift P_j^k$ contains the edge $w^i \gamma \lift E$ if and only if $0\le i < k$. 
\end{lem}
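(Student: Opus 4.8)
The plan is to derive Lemma~\ref{Pbelongslem} almost immediately from Lemma~\ref{belongslem}, using only one elementary fact about trees: since $\lift G$ is a tree, an edge $e$ lies on the (unique) contracted path between two vertices $a$ and $b$ if and only if $e$ \emph{separates} $a$ from $b$, i.e. $a$ and $b$ lie in different components of $\lift G\setminus e$. First I would fix the vertex $V_j$ of $E$ and the word $w$ with $\rho(V_j)=w^{1/n}$, observe that the hypothesis ``$E$ belongs to $w$'' forces $w\ne 1$, and apply Lemma~\ref{belongslem} with the vertex $V=V_j$ to obtain the promised initial subword $\gamma$ of $w$. Write $e_0=\gamma\lift E$, and for each integer $i$ let $\lift R_i$ denote the contracted path from $w^i\lift V_j$ to $w^{i+1}\lift V_j$ (so $\lift R_i=w^i\lift R_0$, the image under the covering transformation $w^i$); this is precisely the path appearing in Lemma~\ref{belongslem}. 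That lemma then reads: the edge $w^m e_0$ lies on $\lift R_i$ if and only if $m=i$, which via the tree fact becomes: $w^m e_0$ separates $w^i\lift V_j$ from $w^{i+1}\lift V_j$ if and only if $m=i$.

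The core of the proof is the following separation statement: for each integer $i$, the edge $w^i e_0$ separates $\lift V_j$ from $w^k\lift V_j$ if and only if $0\le i<k$. For the ``only if'' direction, suppose $i<0$ or $i\ge k$; then every index $i'$ with $0\le i'\le k-1$ satisfies $i'\ne i$, so $\lift R_{i'}$ does not contain $w^i e_0$, whence $w^{i'}\lift V_j$ and $w^{i'+1}\lift V_j$ lie on the same side of $w^i e_0$; chaining this across $i'=0,1,\dots,k-1$ shows $\lift V_j$ and $w^k\lift V_j$ lie on the same side, so $w^i e_0$ does not separate them. For the ``if'' direction, suppose $0\le i\le k-1$ and let $A,B$ be the components of $\lift G\setminus w^i e_0$ with $w^i\lift V_j\in A$ and $w^{i+1}\lift V_j\in B$ (these exist because $w^i e_0$ separates these two vertices, by the $m=i$ case above). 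Since $\lift R_{i'}$ avoids $w^i e_0$ whenever $i'\ne i$, a short downward induction on $i'$ gives $w^{i'}\lift V_j\in A$ for all $0\le i'\le i$, and an upward induction gives $w^{i'}\lift V_j\in B$ for all $i+1\le i'\le k$; in particular $\lift V_j=w^0\lift V_j\in A$ and $w^k\lift V_j\in B$, so $w^i e_0$ separates them.

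Applying the tree fact once more, now to the contracted path from $\lift V_j$ to $w^k\lift V_j$ --- which is exactly $\lift P_j^k$ --- the separation statement translates directly into: $\lift P_j^k$ contains the edge $w^i\gamma\lift E$ if and only if $0\le i<k$, which is the assertion of the lemma. I expect the only delicate part to be the index bookkeeping in the chaining and induction arguments: one must keep track that the pieces $\lift R_{i'}$ being chained all have index $i'\ne i$, which is guaranteed precisely by the case split $0\le i\le k-1$ versus $i\notin\{0,\dots,k-1\}$, and one should note the degenerate case $k=0$, in which $\lift P_j^0$ is the empty path and both sides of the equivalence are vacuous. Everything else is either a citation of Lemma~\ref{belongslem} or an instance of the tree-separation fact.
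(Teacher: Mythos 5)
Your proof is correct, and it takes a genuinely different route from the paper's. The paper's argument decomposes $\lift P_j^k$ as the \emph{concatenation} of the translated segments $\lift R_0,\dots,\lift R_{k-1}$ and applies Lemma~\ref{belongslem} to each piece; this tacitly assumes that the concatenation is already contracted, i.e.\ that the loop at $V_j$ representing $w$ is cyclically reduced (otherwise the last edge of $\lift R_{i}$ cancels against the first edge of $\lift R_{i+1}$ and the concatenation is strictly longer than $\lift P_j^k$). You instead never touch the concatenation: you use the elementary fact that, in the tree $\lift G$, an edge lies on the contracted path between two vertices if and only if it separates them, and then prove the required separation statement by the chaining and two-sided induction arguments, using only the per-segment statement of Lemma~\ref{belongslem}. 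This is cleaner and more robust: it gives the full ``if and only if'' for all integers $i$ at once, with no need to worry about cancellation at the junctions $w^{i'}\lift V_j$, and it does not require the cyclic-reducedness assumption that the paper's concatenation step implicitly relies on. The paper gains a shorter write-up by re-using the explicit edge-bookkeeping in the proof of Lemma~\ref{belongslem} (the sequence of initial subwords $\sigma_i$); your argument gains by treating that lemma as a black box and replacing the bookkeeping with a conceptual tree-separation argument. Both are valid; yours patches a small gap the paper leaves open.
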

\begin{proof}
The path $\lift P_j^k$ is the contracted path from $\lift V_j$ to $w^k\lift V_j$, and so will be the concatenation of the paths from $w^i \lift V_j$ to $w^{i+1}\lift V_j$ for each $i$ with $0\le i < k$. By the above lemma there is some initial subword $\gamma_i$ such that the path from $w^i \lift V_j$ to $w^{i+1}\lift V_j$ uses the edge $w^j\gamma_i \lift E$ if and only if $j=i$. Following the construction in the lemma above we see that all these $\gamma_i$ are the same, since the sequence of letters called $\theta$ in the proof will be the same for each different $i$. Calling $\gamma=\gamma_i$,  the path $\lift P_j^k$ uses the edge $w^j\gamma \lift E$ if and only if $j=i$ for some $0\le i < k$.
\end{proof}

The $\gamma$ in the lemma above can be removed if we additionally assume $\lift P_j$ begins with $\lift E$:
\begin{lem}\label{Pbeginslem}
Let $V_j$ be a vertex of $E$ and assume that $E$ belongs to $w\in \pi$ and $\lift P_j$ begins with $\lift E$. If $\rho(V_j)=w^{1/n}$ then $\lift P_j^k$ contains the edge $w^i\lift E$ if and only if $0\le i < k$. 
\end{lem}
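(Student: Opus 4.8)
The plan is to derive this directly from Lemma~\ref{Pbelongslem}, which already produces exactly this conclusion except with $w^i\gamma\lift E$ in place of $w^i\lift E$ for some initial subword $\gamma$ of $w$. So the entire content is to show that the additional hypothesis that $\lift P_j$ begins with $\lift E$ lets us take $\gamma$ to be trivial.

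Concretely, I would first apply Lemma~\ref{Pbelongslem} to obtain an initial subword $\gamma$ of $w$ with $\lift P_j^k$ containing $w^i\gamma\lift E$ exactly for $0\le i<k$, and then revisit the mechanism that produced $\gamma$, namely the construction in the proof of Lemma~\ref{belongslem}. Writing $\lift P_j$ as $\lift E_1(\sigma_2\lift E_2)\cdots(\sigma_l\lift E_l)$, where $E_1\cdots E_l$ is the reduced loop based at $V_j$ representing $w$, the prefixes $1=\sigma_1,\sigma_2,\dots,\sigma_l$ form an increasing chain of initial subwords of $w$, and $\gamma$ can be taken to be $\sigma_{i_0}$ for a suitable index with $E_{i_0}\in\{E,E^{-1}\}$. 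The hypothesis that $\lift P_j$ begins with $\lift E$ says precisely that $E_1\in\{E,E^{-1}\}$, so one can aim to take $i_0=1$ and hence $\gamma=\sigma_1=1$. Substituting $\gamma=1$ into Lemma~\ref{Pbelongslem} then gives the statement, and I would close by verifying both directions of the ``if and only if'': the $k$ consecutive translated copies of $\lift P_j$ comprising $\lift P_j^k$ contribute precisely the edges $w^0\lift E,\dots,w^{k-1}\lift E$, and no further edge of the form $w^j\lift E$ occurs because every prefix appearing along $\lift P_j^k$ is an initial subword of $w$ and, with $\gamma=1$, no such initial subword is a nonzero power of $w$.

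The step I expect to be the real obstacle is exactly this reduction to $\gamma=1$. The construction in Lemma~\ref{belongslem} implicitly wants $\gamma$ to be a \emph{proper, nontrivial} initial subword of $w$, since the decisive fact used there --- that the only initial subword of $w$ of the form $w^j\gamma$ is $\gamma$ itself --- can fail when $\gamma=1$ (because $w^1=w$ is again an initial subword) or when $\gamma=w$. So to justify taking $\gamma=1$ one must know that once the first edge of $\lift P_j$ contributes the trivial prefix, \emph{no later} edge of $\lift P_j$ projects onto $E$ with prefix a power of $w$; equivalently, that $\lift P_j$ does not simultaneously \emph{end} with a translate of $\lift E$. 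I would establish this by tracking the accumulated prefix along $\lift P_j$: any subsequent occurrence of $E$ is reached only after appending a non-cancelling generator, so its prefix is a nonempty proper initial subword of $w$, never a nonzero power of $w$. Making this last point airtight --- and in particular this is where the hypothesis ``$\lift P_j$ begins with $\lift E$'', together with $E$ having been chosen inside the spanning tree $\mathcal T$, must do its work --- is the delicate part; once it is in place, the remaining counting is routine.
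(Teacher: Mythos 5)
Your overall plan is exactly the paper's: apply Lemma~\ref{Pbelongslem}, and then show $\gamma$ can be taken trivial by revisiting the construction of $\gamma$ in the proof of Lemma~\ref{belongslem}. The paper's actual proof is essentially the two-sentence observation that, since $\lift P_j$ begins with $\lift E$, the index $i$ in the proof of Lemma~\ref{belongslem} can be taken to be $1$, whence $\gamma = \sigma_1$, and since $E$ lies in the spanning tree $\mathcal T$ this word is trivial.

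Where you go beyond the paper is in flagging the subtlety in the ``if and only if'': when $\gamma = 1$, the statement in the proof of Lemma~\ref{belongslem} that ``the only initial subword of $w$ which can be written as $w^j\gamma$ is $\gamma$ itself'' is literally false, because $w = w^1\cdot 1$ is again an initial subword of $w$. You are right that this is a genuine gap, and the paper's short proof does not address it. Your proposed patch (``any subsequent occurrence of $E$ is reached only after appending a non-cancelling generator, so its prefix is a nonempty proper initial subword of $w$, never a nonzero power of $w$'') is pointed in the right direction but is not airtight as stated: a later $\sigma_i$ could in principle equal the full word $w$ if later edges of $P_j$ contribute no new letters, and ``nonempty'' alone does not rule out the value $w$. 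The cleanest way to close this is structural: if $\lift P_j$ used both $\lift E$ and $w\lift E$, then $w^{-1}\lift P_j$ also uses $\lift E$; since $P_j$ is cyclically reduced, the concatenation $w^{-1}\lift P_j \cdot \lift P_j$ is a reduced path in the tree $\lift G$, hence an embedded arc, and cannot traverse $\lift E$ twice. (Equivalently, the hypothesis that $E_1 \in \{E, E^{-1}\}$ together with cyclic reduction forces $E_l \notin \{E, E^{-1}\}$, and the tree argument handles the possibility $\sigma_i = w$ for $i < l$.) With that observation made explicit, your proof is complete and in fact sharpens the paper's.
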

\begin{proof}
We will show that the word $\gamma$ in the above lemmas becomes trivial in this case.
Consider what happens in the proof of Lemma \ref{belongslem} if $\lift P_j$ (called $\lift P$ in that lemma) begins with $\lift E$. In that case we have $i=1$ in that proof, and so $\gamma = \sigma_1 = \theta_1$, and since the edge $E$ is taken to be an edge of the spanning tree for our coherent labeling, this word is trivial. 
\end{proof}

As in the previous section, in what follows we will choose some edge $E \in G$ with initial vertex $V_1$ and terminal vertex $V_2$. We will assume $V_1$ is a periodic point with period $m$ and rotation element $w_1^{1/m}$ and that $V_2$ is a periodic point with period $n$ and rotation element $w_2^{1/n}$. We will choose the edge $E$ to be part of the spanning tree of $G$. This means that  $\lift E$ has initial vertex $\lift V_1$ and terminal vertex  $\lift V_2$.

In the following subsections we continue our theme of generalizing the rotation interval, this time subject to conditions related to the belonging relation above. Generally, when $E$ belongs to one or both of $w_1$ and $w_2$ we will have a theorem. In the case where $E$ does not belong to either $w_1$ or $w_2$ no general result is possible. This was demonstrated in Example \ref{newedge}, where $E$ belongs to neither of $w_1$ or $w_2$ (since $E$ was not part of the original graph), but the only periodic points in $E$ are fixed.

\subsection{$E$ belongs to exactly one of $w_1$, $w_2$.}
\begin{thm}\label{belongstoone}
\begin{enumerate}
\item If $E$ belongs to $w_1$, but not to $w_2$, there exists  an initial word, $\gamma_1$, of $w_1$ such that for any rational $r$ satisfying $0 < p/q < 1/m$,  there are $p,q$ with $r=p/q$ and a periodic point in $E$ with rotation element $(w_1^p \gamma_1)^{1/q}$.

\item  If $E$ belongs to $w_2$ but not to $w_1$, there exists $\gamma_2$  an initial word of $w_2$ such that for any rational $s$ satisfying $0 < s< 1/n$, there are $p',q'$ with $s=p'/q'$ and a periodic point in $E$ with rotation element $(w_2^{p^\prime} \gamma_2)^{1/q^\prime}$.

\end{enumerate}
\end{thm}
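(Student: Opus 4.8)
The plan is to bring everything under Lemma~\ref{roteltsinpathlem}, by locating inside the contraction of the path $\lift P_1^{-kn}\lift E\lift P_2^{km}$ an edge of the form $w_1^p\gamma_1\lift E$ whose exponent data $p$ and $q=kmn$ encode the desired rotation number. First I would apply Lemma~\ref{Pbelongslem} at the vertex $V_1$ (legitimate since $E$ belongs to $w_1$ and $\rho(V_1)=w_1^{1/m}$): it furnishes an initial subword $\gamma_1$ of $w_1$ such that $\lift P_1^{kn}$ uses the edge $w_1^i\gamma_1\lift E$ precisely when $0\le i<kn$. Given a rational $r$ with $0<r<1/m$, write it in lowest terms as $r=a/b$, so that $ma<b$; for a positive integer $t$ to be chosen large, set $k=tb$, $p=tmna$ and $q=kmn=tbmn$, so that $p/q=r$ and $0<p<kn$ (the inequality $p<kn$ being exactly $ma<b$). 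It then suffices to show that for some large $t$ the edge $e:=w_1^p\gamma_1\lift E$ survives in the contraction of $\lift P_1^{-kn}\lift E\lift P_2^{km}$, since Lemma~\ref{roteltsinpathlem} will then produce a periodic point in $E$ with rotation element $(w_1^p\gamma_1)^{1/q}$, which is statement~(1); statement~(2) will follow by the mirror argument with the roles of $(V_1,m,w_1)$ and $(V_2,n,w_2)$ interchanged, using Lemma~\ref{Pbelongslem} at $V_2$ to produce $\gamma_2$ and tracking the edges $w_2^i\gamma_2\lift E$ instead.

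Next, using that $\lift G$ is a tree, the contraction of $\lift P_1^{-kn}\lift E\lift P_2^{km}$ is simply the geodesic between its endpoints $w_1^{kn}\lift V_1$ and $w_2^{km}\lift V_2$, and an edge lies on this geodesic exactly when deleting it separates those two vertices. For $e=w_1^p\gamma_1\lift E$ with $1\le p<kn$, the geodesic $\lift P_1^{kn}$ uses $e$, so $\lift V_1$ and $w_1^{kn}\lift V_1$ lie on opposite sides of $e$; the one-edge geodesic $\lift E$ from $\lift V_1$ to $\lift V_2$ is not $e$ or $e^{-1}$ (since $w_1^p\gamma_1\ne 1$ for $p\ge 1$, as $\gamma_1$ is an initial subword of the nontrivial reduced word $w_1$), so $\lift V_1$ and $\lift V_2$ lie on the same side of $e$. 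Hence $e$ lies on the geodesic from $w_1^{kn}\lift V_1$ to $w_2^{km}\lift V_2$ provided $w_2^{km}\lift V_2$ lies on the same side of $e$ as $\lift V_2$ — that is, provided the geodesic $\lift P_2^{km}$ does not use the edge $e$. So the theorem reduces to showing that, for some large $t$, the path $\lift P_2^{km}$ does not use the edge $w_1^{tmna}\gamma_1\lift E$.

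This last reduction is exactly where the hypothesis that $E$ does \emph{not} belong to $w_2$ is needed, and verifying it is the main obstacle. If $w_2=1$ then $\lift P_2^{km}$ is empty and there is nothing to prove, so assume $w_2\ne 1$. I would write a reduced loop for $w_2$ based at $V_2$ as $\mu\,c\,\mu^{-1}$ with $c$ cyclically reduced; then $\mu\,c^{km}\,\mu^{-1}$ is a reduced loop representing $w_2^{km}$, and since $E$ does not belong to $w_2$ the loop $c$ uses no copy of $E$, so $\mu\,c^{km}\,\mu^{-1}$ uses copies of $E$ only inside the fixed words $\mu,\mu^{-1}$. Lifting this reduced loop from $\lift V_2$ yields precisely the geodesic $\lift P_2^{km}$, so every copy of $E$ it uses lies either in a fixed finite set $F_1$ near $\lift V_2$ or in the $w_2^{km}$-translate $w_2^{km}F_2$ of a fixed finite set $F_2$ near the other endpoint. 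Now the edges $w_1^i\gamma_1\lift E$ are pairwise distinct, so $w_1^{tmna}\gamma_1\lift E\notin F_1$ once $t$ is large; and for each fixed edge $g\lift E\in F_2$, the equality $w_1^{tmna}\gamma_1\lift E=w_2^{tbm}\,g\lift E$ can hold for at most one value of $t$, since two solutions $t_1\ne t_2$ would give $w_1^{smna}=w_2^{sbm}$ with $s=t_1-t_2\ne 0$, forcing $w_1$ and $w_2$ to be powers of a common element of the free group $\pi$; but then their cyclically reduced representatives use the same set of edges, so $E$ belongs to one iff it belongs to the other, contradicting the hypothesis. Hence only finitely many $t$ are bad, and for any admissible large $t$ the reduction above delivers a periodic point of $f$ in $E$ with rotation element $(w_1^{tmna}\gamma_1)^{1/(tbmn)}$, proving~(1); part~(2) is identical after the indicated interchange of $V_1$ and $V_2$.
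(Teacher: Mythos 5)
Your proof is correct and follows the same overall strategy as the paper's: apply Lemma~\ref{Pbelongslem} at $V_1$ to produce $\gamma_1$ and the family of edges $w_1^i\gamma_1\lift E$ lying in $\lift P_1^{kn}$, show the chosen one survives in the contraction of $\lift P_1^{-kn}\lift E\lift P_2^{km}$, and conclude via Lemma~\ref{roteltsinpathlem}. The difference is that you are noticeably more careful at the step the paper glosses over. The paper asserts that since $E$ does not belong to $w_2$, the path $\lift P_2^{km}$ ``contains no edges that project onto $E$ apart from, possibly, the first and last edges,'' and that consequently only $\lift E$ can cancel. That claim is not quite right: if the reduced loop at $V_2$ representing $w_2$ has the form $\mu c\mu^{-1}$ with $c$ cyclically reduced, the belonging hypothesis only controls $c$, and the ``tail'' $\mu$ may traverse $E$ several times. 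Moreover the paper never addresses the possibility that the terminal copies $w_2^{km}g\lift E$ could coincide with the chosen $w_1^i\gamma_1\lift E$. Your treatment handles both points cleanly: the tree/geodesic-separation argument reduces the problem to showing $\lift P_2^{km}$ avoids the single edge $e=w_1^p\gamma_1\lift E$; you locate all $E$-copies of $\lift P_2^{km}$ in a fixed set $F_1$ and a translated fixed set $w_2^{km}F_2$; and you rule out collisions with $w_2^{km}F_2$ for all but finitely many $t$ by the standard fact that $w_1^a=w_2^b$ ($a,b\ne 0$) in a free group forces $w_1,w_2$ to be powers of a common element, whose cyclic reduction would then have to use $E$ for both or neither, contradicting the hypothesis. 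So your argument is a correct and slightly tightened version of the paper's; the extra work is exactly what is needed to make the ``only the first and last edges'' assertion rigorous.
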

\begin{proof}
We prove the first statement. 
Let $r=l/k$.
Since $E$ belongs to $w_1$, by Lemma \ref{Pbelongslem} there exists $\gamma_1$, an initial word of $w_1$, such that $ w_1^i \gamma_1 \lift E$ is in  $\lift P_1^{kn}$ for $0 \leq i < kn$. Since $E$ is not contained in $w_2$, $\lift P_2^{km}$ contains no edges that project onto $E$ apart from, possibly, the first and last edges. So either $\lift P_1^{-kn}\lift E \lift P_2^{km}$ is contracted or, if it is not, the only edge that projects onto $E$ that cancels is $\lift E$. Thus $\lift P_1^{-kn}\lift E \lift P_2^{km}$ must contain $w_1^i  \gamma_1   \lift E$ for $0 \le i < kn$. Thus by Lemma \ref{roteltsinpathlem} there must be a periodic point in $E$ with rotation element $(w_1^i\gamma_1)^{1/kmn}$ for $0 < i < kn$. 

Since $0<l/k<1/m$, we know that $0<lmn<kn$. Thus there is a periodic point with rotation element  $(w_1^{lmn}\gamma_1)^{1/kmn}$. Since $l/k=r$, letting $p=lmn$ and $q=kmn$ gives the result.
\end{proof}

If in the first case we also have that $\lift P_1$ begins with $\lift E$, then we see that $\gamma_1$ will be trivial by using Lemma \ref{Pbeginslem} in place of Lemma \ref{Pbelongslem}. Similarly, in the second case if $\lift P_2$ begins with $\lift E$, then $\gamma_2$ will be trivial.

\subsection{$E$ belongs to both $w_1$, $w_2$.}

\subsubsection{Neither or both of $\lift P_1$ and $\lift P_2$ begin with $\lift E$}

\begin{thm}\label{EbothWEneitherP}
If $E$ belongs to both $w_1$ and $w_2$ and neither $\lift P_1$ nor $\lift P_2$ begins with $\lift E$, then there exists $\gamma_1$ an initial word of $w_1$ and $\gamma_2$  an initial word of $w_2$ such that 
for any rationals $r$ and $s$ satisfying $0 < r < 1/m$, $0<s<1/n$ there $p,q,p',q'$  with $r=p/q$ and $s=p'/q'$ and periodic points in $E$ with rotation elements $(w_1^p \gamma_1)^{1/q}$ and $(w_2^{p^\prime} \gamma_2)^{1/q^\prime}$.
\end{thm}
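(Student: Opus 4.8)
The plan is to run the proof of Theorem~\ref{belongstoone} for $w_1$ and $w_2$ simultaneously inside a single path. As in that proof and in Theorem~\ref{fpthm}, the object to analyze is the contracted path $\lift P_1^{-kn}\lift E\lift P_2^{km}$ for a suitable positive integer $k$: by Lemma~\ref{roteltsinpathlem}, each edge $\delta\lift E$ occurring in this contraction yields a periodic point of $E$ with rotation element $\delta^{1/kmn}$. So the whole argument reduces to (i) showing that the path $\lift P_1^{-kn}\lift E\lift P_2^{km}$ is already reduced, hence equal to its own contraction, and (ii) locating the right edges inside it and translating ranges of indices into ranges of rotation numbers.

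For step (ii) I would apply Lemma~\ref{Pbelongslem} twice. Since $E$ belongs to $w_1$ and $\rho(V_1)=w_1^{1/m}$, there is an initial word $\gamma_1$ of $w_1$ so that $\lift P_1^{K}$ contains the edge $w_1^i\gamma_1\lift E$ precisely for $0\le i<K$; taking $K=kn$ puts the edges $w_1^i\gamma_1\lift E$ with $0\le i<kn$ into $\lift P_1^{kn}$, hence (reversed) into $\lift P_1^{-kn}$. Symmetrically, since $E$ belongs to $w_2$ and $\rho(V_2)=w_2^{1/n}$, there is an initial word $\gamma_2$ of $w_2$ with $w_2^i\gamma_2\lift E$ in $\lift P_2^{km}$ for $0\le i<km$. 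These $\gamma_1,\gamma_2$ will be the words named in the theorem.

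Step (i) is the crux. Each of the three pieces $\lift P_1^{-kn}$, $\lift E$, and $\lift P_2^{km}$ is reduced, so the only possible cancellations in the concatenation occur at the two junctions on either side of $\lift E$. As shown in the proof of Lemma~\ref{Pbelongslem}, the blocks making up $\lift P_1^{kn}$ concatenate with no cancellation, so its first edge equals the first edge of $\lift P_1$; since $\lift P_1$ does not begin with $\lift E$ this first edge is neither $\lift E$ nor $\lift E^{-1}$, hence the last edge of $\lift P_1^{-kn}$ is not $\lift E^{-1}$ and nothing cancels at the $\lift P_1^{-kn}\,|\,\lift E$ junction. Likewise the first edge of $\lift P_2^{km}$ equals the first edge of $\lift P_2$, which is not $\lift E^{-1}$ because $\lift P_2$ does not begin with $\lift E$, so nothing cancels at the $\lift E\,|\,\lift P_2^{km}$ junction. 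Therefore $\lift P_1^{-kn}\lift E\lift P_2^{km}$ is reduced, and its contraction contains all of the edges $w_1^i\gamma_1\lift E$ for $0\le i<kn$, all of the edges $w_2^i\gamma_2\lift E$ for $0\le i<km$, and $\lift E$ itself.

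Finally, given rationals $r,s$ with $0<r<1/m$ and $0<s<1/n$, choose a positive integer $k$ with $kr=l\in\Z$ and $ks=l'\in\Z$. From $l/k<1/m$ we get $lmn<kn$, and from $l'/k<1/n$ we get $l'mn<km$, so $i=lmn$ and $i=l'mn$ fall in the respective ranges above. Lemma~\ref{roteltsinpathlem} then produces periodic points of $E$ with rotation elements $(w_1^{lmn}\gamma_1)^{1/kmn}$ and $(w_2^{l'mn}\gamma_2)^{1/kmn}$; setting $p=lmn$, $q=q'=kmn$, and $p'=l'mn$ gives $p/q=r$, $p'/q'=s$, and the rotation elements take the desired forms $(w_1^{p}\gamma_1)^{1/q}$ and $(w_2^{p'}\gamma_2)^{1/q'}$. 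The main obstacle is really the reducedness check in step (i): one must be confident that the two families $w_i^{\bullet}\gamma_i\lift E$, which sit inside $\lift P_1^{-kn}$ and $\lift P_2^{km}$ and get glued on either side of the shared edge $\lift E$, are not destroyed when the concatenation is reduced. This is exactly where the hypothesis that neither $\lift P_1$ nor $\lift P_2$ begins with $\lift E$ does its work, and it is why the ``neither'' case is the most straightforward of those treated in this subsection; once one of the $\lift P_i$ begins with $\lift E$ real cancellations appear at the edge $\lift E$ and the analysis must be done more carefully.
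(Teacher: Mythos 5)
Your proposal is correct and follows essentially the same route as the paper: choose $k$ so that $r=l/k$ and $s=l'/k$ for integers $l,l'$, apply Lemma~\ref{Pbelongslem} to each of $\lift P_1^{kn}$ and $\lift P_2^{km}$ to produce $\gamma_1,\gamma_2$ and the families of edges, observe that since neither $\lift P_i$ begins with $\lift E$ the concatenation $\lift P_1^{-kn}\lift E\lift P_2^{km}$ is already reduced, and then invoke Lemma~\ref{roteltsinpathlem} with $i=lmn$, $j=l'mn$. You spell out the reducedness check more explicitly than the paper (which simply asserts it) and you correctly give the denominators as $q=q'=kmn$, quietly repairing a typo in the paper's proof where the final sentence says $q=q'=k$.
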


\begin{proof}
Choose $k$ such that there exist integers $l$ and $l^\prime$ with $r=l/k$ and $s=l^\prime/k$.
Since neither $\lift P_1$ nor $\lift P_2$ begin with $\lift E$, the path $\lift P_1^{-kn}\lift E \lift P_2^{km}$ is fully contracted. Since both $w_1$ and $w_2$ contain $E$ there exist $\gamma_1$ and $\gamma_2$ such that $ w_1^i \gamma_1 \lift E$ is in  $\lift P_1^{kn}$ for $0 \leq i < kn$ and that $w_2^j\gamma_2  \lift E$ is in  $\lift P_2^{km}$ for $0 \leq j < km$.  Take $i=lmn$, $j=l^\prime mn$ and applying Lemma \ref{roteltsinpathlem} completes the proof with $p=lmn, p'=l'mn,$ and $q=q'=k$.
\end{proof}

\begin{thm}\label{EbothWbothP}
If $E$ belongs to both $w_1$ and $w_2$ and both $\lift P_1$ and $\lift P_2$ begin with $\lift E$, then for any rational numbers $r$ and $s$ satisfying $0 \leq r < 1/m$ and $0 \leq s <1/n$ there are periodic points in $E$ with rotation elements $w_1^r$ and $ w_2^s$.
\end{thm}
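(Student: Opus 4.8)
The plan is to argue exactly as in Theorems \ref{fpboth} and \ref{EbothWEneitherP}: fix a positive integer $k$, contract the path $\lift P_1^{-kn}\lift E\lift P_2^{km}$, locate the translates of $\lift E$ that survive the contraction, and then quote Lemma \ref{roteltsinpathlem}. Since $\lift P_1$ begins with $\lift E$, the path $\lift P_1^{kn}$ (which, as in the proof of Lemma \ref{Pbelongslem}, is the concatenation of the translates $\lift P_1, w_1\lift P_1,\dots,w_1^{kn-1}\lift P_1$) also begins with $\lift E$, so the last edge of $\lift P_1^{-kn}$ is $\lift E^{-1}$; and since $\lift P_2$ begins with $\lift E$, the first edge of $\lift P_2^{km}$ is $\lift E^{-1}$ as well. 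Hence in forming $\lift P_1^{-kn}\lift E\lift P_2^{km}$ the inserted edge $\lift E$ cancels the terminal $\lift E^{-1}$ of $\lift P_1^{-kn}$, and—just as in the proof of Theorem \ref{fpboth}—no further cancellation can occur, since after this cancellation the adjacent edges are the (reversed) second edge of $\lift P_1^{kn}$ and the first edge $\lift E^{-1}$ of $\lift P_2^{km}$, and these are not mutually inverse because $\lift P_1^{kn}$, being reduced and beginning with $\lift E$, has second edge different from $\lift E^{-1}$. Therefore the contraction of $\lift P_1^{-kn}\lift E\lift P_2^{km}$ is $\lift P_1^{-kn}$ with its last edge deleted, followed by $\lift P_2^{km}$, and this concatenation is fully reduced.

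Next I would read off the edges of this contracted path lying over $E$. Because $E$ belongs to $w_1$ and $\lift P_1$ begins with $\lift E$, Lemma \ref{Pbeginslem} gives that $\lift P_1^{kn}$ uses the edge $w_1^i\lift E$ exactly for $0\le i<kn$, each such edge only once since a reduced path in a tree repeats no edge; deleting the initial edge $\lift E=w_1^0\lift E$ leaves precisely the edges $w_1^i\lift E$ with $1\le i<kn$. Similarly, since $E$ belongs to $w_2$ and $\lift P_2$ begins with $\lift E$, Lemma \ref{Pbeginslem} gives that $\lift P_2^{km}$ uses $w_2^j\lift E$ exactly for $0\le j<km$. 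So the contraction of $\lift P_1^{-kn}\lift E\lift P_2^{km}$ contains $w_1^i\lift E$ for all $1\le i<kn$ and $w_2^j\lift E$ for all $0\le j<km$, and Lemma \ref{roteltsinpathlem} then yields, for each such $i$, a periodic point in $E$ with rotation element $(w_1^i)^{1/kmn}=w_1^{i/(kmn)}$, and for each such $j$, a periodic point in $E$ with rotation element $w_2^{j/(kmn)}$.

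It remains to choose $k$ so as to realize the prescribed exponents. If $r=0$ then $w_1^r=1$, and the fixed point in $E$ furnished by Theorem \ref{fpboth} has rotation element $1$; likewise for $s=0$. If $0<r<1/m$, write $r=a/b$ with $a,b$ positive integers and take $k=b$ and $i=amn$: then $1\le i$, while $i<kn=bn$ is equivalent to $am<b$, i.e. to $r<1/m$, so the corresponding point has rotation element $w_1^{amn/(bmn)}=w_1^{a/b}=w_1^r$. Symmetrically, for $0<s<1/n$ write $s=c/d$ and take $k=d$, $j=cmn$, so that $0\le j<km=dm$ because $s<1/n$ means $cn<d$; the corresponding point has rotation element $w_2^{cmn/(dmn)}=w_2^{c/d}=w_2^s$. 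The only step that needs care is the cancellation bookkeeping in the first paragraph—checking that contracting $\lift P_1^{-kn}\lift E\lift P_2^{km}$ deletes exactly the one inserted edge, so that every $w_1^i\lift E$ with $i\ge 1$ and every $w_2^j\lift E$ genuinely persists into the contracted path; once this is in hand, the remainder is the same elementary exponent arithmetic used in Theorem \ref{EbothWEneitherP}.
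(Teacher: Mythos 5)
Your proof is correct and follows essentially the same route as the paper's: contract $\lift P_1^{-kn}\lift E\lift P_2^{km}$, observe that exactly the one inserted copy of $\lift E$ is consumed by cancellation, read off the surviving edges $w_1^i\lift E$ and $w_2^j\lift E$ via Lemma \ref{Pbeginslem}, and then choose $k$ (and $i$, $j$) to hit the prescribed exponents via Lemma \ref{roteltsinpathlem}. Your extra bookkeeping in the first paragraph — checking that the edges flanking the surviving $\lift E^{-1}$ are not mutually inverse because $\lift P_1^{kn}$ and $\lift P_2^{km}$ are reduced — is a welcome expansion of what the paper compresses into the phrase ``fully contracted after contracting $\lift E^{-1}\lift E\lift E^{-1}$ to $\lift E^{-1}$.'' The one small inefficiency is the separate appeal to Theorem \ref{fpboth} when $r=0$ or $s=0$: since $w_2^0\lift E=\lift E$ itself survives in the contracted path, Lemma \ref{roteltsinpathlem} already gives a periodic point with trivial rotation element, so the uniform argument covers that endpoint without a separate case.
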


\begin{proof}
Since both $\lift P_1$ nor $\lift P_2$ begin with $\lift E$, the path $\lift P_1^{-kn}\lift E \lift P_2^{km}$ is fully contracted after contracting  $\lift E^{-1}\lift E \lift E^{-1}$ to $\lift E^{-1}$. Since $E$ belongs to both $w_1$ and $w_2$ and both $P_1$ and $P_2$ begin with $\lift E$, Lemma \ref{Pbeginslem} shows that $ w_1^i  \lift E$ is in  $\lift P_1^{kn}$ for $0 \leq i < kn$ and  $w_2^j \lift E$ is in  $\lift P_2^{km}$ for $0 \leq j < km$.  

As above, choose $k$ such that there exist integers $l$ and $l^\prime$ with $r=l/k$ and $s=l^\prime/k$. Then take $i=lmn$, $j=l^\prime mn$, and apply Lemma \ref{roteltsinpathlem}. We obtain rotation elements $w_1^{l/k}=w_1^r$ and $w_2^{l'/k} = w_2^s$.
\end{proof}

The last case that we need to consider is when exactly one of $\lift P_1$ or $\lift P_2$ begins with $\lift E$. This is the case when several edges can cancel when contracting $\lift P_1^{-1}\lift E \lift P_2$. In the next subsection we look at the maximum number of edges that can cancel. We will see that when $w_1$ and $w_2$ are powers of some common word $w$ we obtain a result analogous to degree one maps of the circle that is outlined in the remark below. When $w_1$ and $w_2$ are not powers of some common word, our result is similar to Theorem \ref{EbothWEneitherP}.

\begin{rem}
Suppose that we have a degree one map of the circle that has two periodic orbits $O_1$ and $O_2$, that the points in $O_1$ have rotation number $r$ and those in $O_2$ have rotation number $s$, then it is well known that for any rational $t$ between $r$ and $s$ there is a periodic point with rotation number $t$. We can translate this fact into our setting in the following way.

Consider the points in the two orbits to be vertices and, with this interpretation, the map can be considered to be a vertex  map on a graph. Let $a$ be the generator of the fundamental group. The points in $O_1$ will all have rotation elements of the form $a^r$ and those in $O_2$ will equal $a^s$. The result above then says for any rational $t$ between $r$ and $s$ there is a periodic point with rotation element $a^t$. 
\end{rem}

\subsubsection{$E$ belongs to $w$, and $w_1$ and $w_2$ are powers of a common word $w$}

Here we assume there is a word $w$ such that $w_1 = w^{k_1}$ and $w_2 = w^{k_2}$.

\begin{thm}\label{cfrotint}
Suppose that $E$ belongs to $w$. If $\rho(V_1)=w^{k_1/m}$ and $\rho(V_2)=w^{k_2/n}$, then for any rational number $r$ that lies between $k_1/m$ and $k_2/n$ there exists $p$ and $q$ with $p/q=r$ and a periodic point $x \in E$ with $\rho(x)=(w^{p}\gamma)^{1/q}$, where $\gamma$ is an initial word of $w$.
\end{thm}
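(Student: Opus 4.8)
The strategy is to run essentially the same path-analysis used in Theorems \ref{EbothWEneitherP} and \ref{EbothWbothP}, but now taking careful account of exactly how many $E$-projecting edges can cancel when we contract $\lift P_1^{-kn}\lift E\lift P_2^{km}$. Since $E$ belongs to $w$, Lemma \ref{Pbelongslem} tells us there is an initial word $\gamma$ of $w$ so that $\lift P_1^{kn}$ contains $w^i\gamma\lift E$ exactly for $0\le i< k_1kn$ (here I use $\rho(V_1)=w^{k_1/m}$, so $\lift P_1^{k}$ actually reaches $w^{k_1 k}\lift V_1$; I will need to be careful about bookkeeping here, expressing everything in terms of the exponents $k_1,k_2$), and similarly $\lift P_2^{km}$ contains $w^j\gamma\lift E$ exactly for $0\le j< k_2 km$. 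The edges that project onto $E$ occurring in the concatenation $\lift P_1^{-kn}\lift E\lift P_2^{km}$ are therefore a known finite list of translates $w^i\gamma\lift E$, together with $\lift E$ itself in the middle. When we contract this path, only the edges near the splice point $\lift E$ can cancel, and the belonging hypothesis guarantees that all the $E$-edges appearing are of the form $w^i\gamma\lift E$ with the index $i$ ranging over a contiguous block (negative indices coming from $\lift P_1^{-kn}$, non-negative from $\lift P_2^{km}$, after aligning coordinates so that the basepoint of $\lift E$ sits at $w^0\lift V_1$).

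The key step is the cancellation count. After choosing a common denominator $k$ so that $r=l/k$ lies strictly between $k_1/m$ and $k_2/n$, I would set up coordinates so that $\lift P_1^{-kn}$ contributes edges $w^i\gamma\lift E$ for $i$ in some range of negative values down to a lower bound coming from $k_1$, the middle $\lift E$ sits at index $0$, and $\lift P_2^{km}$ contributes $w^j\gamma\lift E$ for $j$ from $0$ up to an upper bound coming from $k_2$. The cyclically reduced structure means these translates appear in monotone order along the path, so after full contraction exactly a contiguous sub-block survives — I claim every index strictly between the relevant bounds survives (possibly losing only the extreme ones, exactly as in the earlier theorems where ``the only edge that cancels is $\lift E$''). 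Concretely, for any $i$ strictly between $0$ and $k_1 kn$ (or the corresponding negative range), and any $j$ strictly between $0$ and $k_2 km$, the edge $w^i\gamma\lift E$ or $w^{-j}\gamma\lift E$ lies in the contraction of $\lift P_1^{-kn}\lift E\lift P_2^{km}$. Then Lemma \ref{roteltsinpathlem} produces a periodic point in $E$ with rotation element $(w^i\gamma)^{1/kmn}$ for each such $i$.

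Finally I would reconcile the exponent arithmetic: a rational $r$ strictly between $k_1/m$ and $k_2/n$ can be written so that $rmn$ falls strictly inside the block of admissible indices $i$ (after scaling by $k$), giving a periodic point with $\rho(x)=(w^{lmn}\gamma)^{1/kmn}$; setting $p=lmn$, $q=kmn$ yields $p/q=r$ and the stated form $(w^p\gamma)^{1/q}$. One should check the boundary cases $r=k_1/m$ and $r=k_2/n$ separately — these correspond to the vertices themselves (rotation elements $w^{k_1/m}$ and $w^{k_2/n}$), so the closed-versus-open issue at the endpoints is handled by just citing the vertices, if indeed the statement intends the closed interval.

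The main obstacle I anticipate is the cancellation bookkeeping: unlike the ``neither/both begin with $\lift E$'' cases, here exactly one of $\lift P_1$, $\lift P_2$ may begin with $\lift E$, so a whole initial stretch of edges of $\lift P_2$ (or $\lift P_1$) can cancel against the tail of $\lift P_1^{-kn}$ (or against $\lift E$), and I must verify that this cancellation never reaches an edge $w^i\gamma\lift E$ with $i$ in the interior of the admissible range. The belonging hypothesis together with the ``growing initial subword'' structure from the proof of Lemma \ref{belongslem} is exactly what prevents a cyclically reduced $w$ from re-traversing $E$, so the surviving $E$-edges remain a contiguous block missing at most its endpoints; making this precise is the crux of the argument.
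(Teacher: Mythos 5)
Your general strategy matches the paper's: analyze the path $\lift P_1^{-kn}\lift E\lift P_2^{km}$, identify which edges projecting onto $E$ survive contraction, and invoke Lemma \ref{roteltsinpathlem}. However, your proposal has a genuine gap at exactly the point you flag as the ``main obstacle.'' You never actually resolve the cancellation bookkeeping, and moreover the structural claim you lean on to sidestep it is not justified and does not follow from the cited lemmas.

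There are two specific problems. First, you misapply Lemma \ref{Pbelongslem}. That lemma, applied to $V_2$ with $\rho(V_2)=w_2^{1/n}$ where $w_2=w^{k_2}$, produces translates of the form $w_2^j\gamma_2\lift E=w^{k_2 j}\gamma_2\lift E$ for $0\le j<k$ --- i.e., only powers of $w$ that are (up to a fixed additive shift in the exponent coming from $\gamma_2$) multiples of $k_2$. You claim the path contains $w^i\gamma\lift E$ for \emph{all} $i$ in a range, but this is precisely the thing that must be proved and is not immediate. The paper closes this gap with an interpolation argument: writing $\gamma_2=w^t\gamma$ with $t\le k_2$ and $\gamma$ an initial word of $w$, one has $k_2(j-1)+t\le k_2 j\le k_2 j+t$, and since the reduced path already contains the edges $w^{k_2(j-1)+t}\gamma\lift E$ and $w^{k_2 j+t}\gamma\lift E$, the unique geodesic between them in the tree --- a subpath of $\lift P_2^{km}$ --- must pass through $w^{k_2 j}\gamma\lift E$. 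This step is entirely missing from your account.

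Second, your claim that after contraction ``exactly a contiguous sub-block survives, possibly losing only the extreme ones'' is unsupported, and the paper does not argue that way. When exactly one of $\lift P_1,\lift P_2$ begins with $\lift E$, a long stretch can cancel; you cannot simply assert that only endpoints are lost. The paper instead proves the target edge $w^{k_2 p'mn}\gamma\lift E$ lies in $\lift P_2^{q'k_2 m}$ but does \emph{not} lie in $\lift P_1^{q'k_2 n}$, because the exponents of $w$ on $E$-edges in $\lift P_1^{q'k_2 n}$ are bounded above by $q'k_2 n k_1$, and the hypothesis $k_1/m<p'/q'$ forces $k_2 p'mn>q'k_2 n k_1$. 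An edge that appears in only one side of the splice cannot cancel, so it survives. That explicit exponent comparison is the heart of the argument; it is what replaces your unproved ``contiguous block missing at most endpoints'' assertion, and without it the proof does not go through.
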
 

\begin{proof}
Assume without loss of generality that $k_1/m < k_2/n$. Take any natural numbers $p',q'$ with $p'/q'=r$, and we will look at the path $\lift P_1^{-q'k_2n} \lift E \lift P_2^{q'k_2m}$ and show that it must contain $w^{k_2p'mn}\gamma \lift E$. We do this by first showing that $\lift P_2^{q'k_2m}$ contains $w^{k_2p'mn}\gamma \lift E$ and then showing that $\lift P_1^{q'k_2n}$ does not. 

Consider $\lift P_2^{q'k_2m}$. Lemma \ref{Pbelongslem} tells us that there is some initial subword $\gamma_2$ of $w_2$ such that $\lift P_2^{q'k_2m}$ contains $w_2^j\gamma_2 \lift E$ whenever $0 \le j< q'k_2m$. Write $\gamma_2$ as $w^t \gamma$, where $\gamma$ is an initial word of $w$. Since $w=w_2^{k_2}$ it must be the case that $t\le k_2$. 
 
Writing $w_2$ and $\gamma_2$ in terms of $w$ tells us that $w^{k_2j+t}\gamma \lift E$ is contained in $\lift P_2^{q'k_2m}$ whenever $0 \le j< q'k_2m$. 

Since $t\le k_2$ we have $k_2(j-1)+t \le k_2j  \leq k_2j + t$. When $1 \leq j < q'k_2m$, both $w^{k_2(j-1)+t}\gamma \lift E$ and $w^{k_2j+t}\gamma \lift E$ are contained in $\lift P_2^{q'k_2m}$. The shortest path that contains both $w^{k_2(j-1)+t}\gamma \lift E$ and $w^{k_2j+t}\gamma \lift E$  contains  $w^{k_2j}\gamma \lift E$. Thus $w^{k_2j}\gamma \lift E$ is contained in $\lift P_2^{q'k_2m}$ for $1 \leq j< q'k_2m$. We now choose a particular value for $j$.

Since $p'/q' < k_2/n$, we have $p'mn < q'k_2m$. Let $j=p'mn$. The previous paragraph shows that $w^{k_2p'mn}\gamma \lift E$ belongs to $\lift P_2^{q'k_2m}$. 
 
We now consider $\lift P_1^{q'k_2n}$. It contains $w_1^i\gamma_1 \lift E$ for $0 \le i< q'k_2n$ for some initial word $\gamma_1$ of $w_1$. The words $w$ and $\gamma_1$ can be rewritten in terms of $w$. But note that, by Lemma \ref{Pbelongslem}, $\lift P_1^{q'k_2n}$ cannot contain any terms of the form $w^i \gamma\lift E$ for $i>q'k_2nk_1$. It is straightforward to check that $k_1/m < p'/q'$ implies that $k_2p'mn>q'k_2nk_1$. So $w^{k_2p'mn}\gamma \lift E$ belongs to $\lift P_2^{q'k_2m}$ but not to $\lift P_1^{q'k_2n}$. Consequently it must belong to $\lift P_1^{-q'k_2n} \lift E \lift P_2^{q'k_2m}$.
 So by  Lemma \ref{roteltsinpathlem} there exists a periodic point $x \in E$ with $\rho(x)=(w^{k_2p'mn}\gamma)^{1/k_2q'mn}$. Letting $p=k_2p'mn$ and $q=k_2q'mn$ gives the result.
\end{proof}

\begin{rem} The hypotheses of the above lemma do not take into account whether or not the paths $\lift P_1$ and $\lift P_2$ begin with $\lift E$. However, the proof considers the maximum possible number of edges in $\lift P_1^{-1} \lift E \lift P_2$ canceling. In the cases when both $\lift P_1$ and $\lift P_2$ begin with $\lift E$ or neither begin with $\lift E$, the contracted path $\lift P_1^{-1} \lift E \lift P_2$ still contains $\lift E$ and so in these cases the conclusions of Theorems \ref{EbothWbothP} and \ref{EbothWEneitherP} give stronger results. Consequently, the above lemma only provides useful information when exactly one of $\lift P_1$ and $\lift P_2$ begin with $\lift E$. 
\end{rem}

\subsubsection{$E$ belongs to both $w_1$ and $w_2$, but $w_1$ and $w_1$ are not powers of a common word}

\begin{thm} \label{belongsnotpowers}
Suppose $E$ belongs to both $w_1$ and $w_2$ and $w_1$ and $w_2$ are not powers of a common word. Then there exists $\gamma_1$ an initial word of $w_1$ and $\gamma_2$  an initial word of $w_2$ such that 
for any rationals $r$ and $s$ satisfying $0 < r < 1/m$, $0<s<1/n$ there exist $p,q,p',q'$ and periodic points in $E$ with rotation elements $(w_1^p \gamma_1)^{1/q}$ and $(w_2^{p^\prime} \gamma_2)^{1/q^\prime}$, where $r=p/q$ and 
$s=p^\prime/q^\prime$.\end{thm}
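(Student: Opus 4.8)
The plan is to run the same strategy as in Theorem \ref{EbothWEneitherP}, but to keep careful track of the cancellation that can now occur in the concatenated path $\lift P_1^{-kn}\lift E\lift P_2^{km}$ (which was ruled out there by the hypothesis on the $\lift P_i$). First I would fix, once and for all, the initial subwords $\gamma_1$ of $w_1$ and $\gamma_2$ of $w_2$ supplied by Lemma \ref{Pbelongslem}, so that for every $K$ the path $\lift P_1^K$ uses the edge $w_1^i\gamma_1\lift E$ exactly for $0\le i<K$, and $\lift P_2^K$ uses $w_2^j\gamma_2\lift E$ exactly for $0\le j<K$. Since $\lift G$ is a tree, the contraction of $\lift P_1^{-kn}\lift E\lift P_2^{km}$ is the unique reduced path between its endpoints $w_1^{kn}\lift V_1$ and $w_2^{km}\lift V_2$, and a reduced path in a tree consists of exactly those edges that separate its two endpoints. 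A short tree argument then gives the following: if $1\le i<kn$ and $w_1^i\gamma_1\ne 1$ (so that $w_1^i\gamma_1\lift E\ne\lift E$, hence $\lift V_1$ and $\lift V_2$ lie on the same side of $w_1^i\gamma_1\lift E$), then $w_1^i\gamma_1\lift E$ lies in the contraction if and only if the path $\lift P_2^{km}$ does not cross it, i.e.\ if and only if $w_1^i\gamma_1\ne w_2^j\gamma_2$ in $\pi$ for every $0\le j<km$ (the only edges of $\lift P_2^{km}$ projecting onto $E$ being the $w_2^j\gamma_2\lift E$). The symmetric statement holds for the edges $w_2^j\gamma_2\lift E$.

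Here is where the hypothesis enters: I would show that the exceptional set is essentially empty. Since $E$ belongs to $w_1$ and to $w_2$, neither word is trivial. If $w_1^{i_1}\gamma_1=w_2^{j_1}\gamma_2$ and $w_1^{i_2}\gamma_1=w_2^{j_2}\gamma_2$ with $i_1\ne i_2$, then cancelling $\gamma_1$ and $\gamma_2$ yields $w_1^{i_1-i_2}=w_2^{j_1-j_2}$ with both exponents nonzero; by the standard fact that the centralizer of a nontrivial element of a free group is infinite cyclic, $w_1$ and $w_2$ would then be powers of a common element, contradicting our hypothesis. Hence there is at most one $i$ for which $w_1^i\gamma_1=w_2^j\gamma_2$ holds for some $j$; and there is at most one $i$ with $w_1^i\gamma_1=1$. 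Consequently, for all but at most two values of $i$ in $\{1,\dots,kn-1\}$ the edge $w_1^i\gamma_1\lift E$ survives in the contraction of $\lift P_1^{-kn}\lift E\lift P_2^{km}$, so by Lemma \ref{roteltsinpathlem} there is a periodic point of $f$ in $E$ with rotation element $(w_1^i\gamma_1)^{1/kmn}$; symmetrically one obtains periodic points with rotation element $(w_2^j\gamma_2)^{1/kmn}$ for all but at most two values of $j$ in $\{1,\dots,km-1\}$.

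It remains to extract the prescribed rotation elements by arithmetic. Writing $r=a/b$ in lowest terms, the constraint $0<r<1/m$ is precisely $am<b$. Choosing $k$ to be a positive multiple of $b$ and setting $i=r\,kmn=a(k/b)mn$ produces an integer with $1\le i<kn$ (the upper bound being equivalent to $am<b$, independently of which multiple of $b$ is used). The two exceptional values of $i$ do not depend on $k$, so after replacing $k$ by a larger multiple of $b$ if necessary we may assume $i$ is a surviving index; then $p=i$ and $q=kmn$ satisfy $p/q=r$ and give a periodic point in $E$ with rotation element $(w_1^p\gamma_1)^{1/q}$. The construction of a periodic point with rotation element $(w_2^{p'}\gamma_2)^{1/q'}$ with $p'/q'=s$ is identical, using $\lift P_2^{km}$ and the fact that $0<s<1/n$ is equivalent to $a'n<b'$ when $s=a'/b'$ in lowest terms (the two required periodic points can be produced with independent choices of $k$, so no compatibility between $q$ and $q'$ is needed).

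I expect the main obstacle to be the tree-combinatorial bookkeeping in the first paragraph: pinning down exactly which edges of $\lift P_1^{-kn}$ and $\lift P_2^{km}$ cancel when the two paths are spliced through $\lift E$, and verifying that the "crossing" criterion is exactly the equality $w_1^i\gamma_1=w_2^j\gamma_2$ in $\pi$. Once that criterion is established, the free-group centralizer fact immediately collapses the exceptional set to at most two indices and the remainder of the argument is routine.
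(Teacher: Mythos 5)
Your proposal follows essentially the same outline as the paper's proof: apply Lemma~\ref{Pbelongslem} to get the families $w_1^i\gamma_1\lift E \subset \lift P_1^{kn}$ and $w_2^j\gamma_2\lift E \subset \lift P_2^{km}$, argue that most of these edges survive the contraction of $\lift P_1^{-kn}\lift E\lift P_2^{km}$, apply Lemma~\ref{roteltsinpathlem}, and finish with arithmetic on $k$ and $i$. Where you differ is in the key cancellation step: the paper asserts rather loosely that ``$w_1^i\gamma_1 \neq w_2^j\gamma_2$ when $i$ and $j$ are sufficiently large'' without proof, whereas you pin this down cleanly via the free-group centralizer fact (two distinct solutions of $w_1^i\gamma_1=w_2^j\gamma_2$ would force $w_1^{i_1-i_2}=w_2^{j_1-j_2}$ with nonzero exponents, making $w_1,w_2$ powers of a common word), yielding the sharper conclusion that at most two indices are ever exceptional. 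Your tree-geodesic framing of which edges survive the splice is also more precise than the paper's informal treatment, and you use the correct exponents $\lift P_1^{kn}$, $\lift P_2^{km}$ where the paper's proof appears to transpose $m$ and $n$. The argument is sound; the one small imprecision---claiming the exceptional indices ``do not depend on $k$'' when strictly only their cardinality is $k$-independent---does not affect the conclusion, since you only need to avoid a bounded set of bad $i$'s.
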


\begin{proof}

For any $k>0$,  by Lemma \ref{Pbelongslem} there are initial words $\gamma_1, \gamma_2$ of $w_1, w_2$ such that $ w_1^i \gamma_1 \lift E$ is in  $\lift P_1^{km}$ for $0 \leq i < km$ and that $w_2^j\gamma_2 \lift E$ is in  $\lift P_2^{kn}$ for $0 \leq j < kn$. Since $w_1$ and $w_2$ are not powers of some common word, we will have $w_1^i\gamma_1 \neq w_2^j\gamma_2$ when $i$ and $j$ are sufficiently large. Thus there is some $N$ such that $w_1^i  \gamma_1 \lift E$ is in $\lift P_1^{km}$ but not in $\lift P_2^{kn}$ for $N\le i < km$ and $w_2^j\gamma_2\lift E$ is in $\lift P_2^{kn}$ for $N \le j < kn$, and we may choose $k$ sufficiently large so that $N < km$ and $N < kn$.

Since $w_1^i\gamma_1\lift E$ is in $\lift P_1^{kn}$ but not $\lift P_2^{km}$ for $N \le i < km$, this edge does not cancel in the contraction of $\lift P_1^{kn}\lift E \lift P_2^{km}$. Thus by Lemma \ref{roteltsinpathlem} there is a periodic point $x$ with rotation element $\rho(x) = (w_1^i\gamma_1)^{1/kmn}$. Now let $\bar p, \bar q$ be large numbers with $r = \bar p/ \bar q$. The number $k$, which was chosen to be any sufficiently large number, can be chosen to be a multiple of $\bar q$, say $k=l\bar q$. 

Then let $i = l\bar pmn$, being sure that $\bar p$ is sufficiently large so that $N<i$. Since $\bar p / \bar q < 1/n$ it is easy to see that $i=l\bar pmn < km$, and so we have $\rho(x) = (w_1^{l\bar p mn}\gamma_1)^{1/l\bar qmn}$. Letting $p=l\bar pmn$ and $q=l\bar qmn$ gives the result.

Repeating the arguments in the above two paragraphs using $w_2^j$ in place of $w_1^i$ gives a periodic point with rotation element $(w_2^{p'} \gamma_2)^{1/q'}$.
\end{proof}

\begin{rem}

The comments contained in the previous remark are also relevant here. In the cases when neither $\lift P_1$ and $\lift P_2$ begin with $\lift E$ or both begin with $\lift E$, the conclusions of Theorems \ref{EbothWEneitherP} and \ref{EbothWbothP} give stronger results. So the lemma only provides useful information when exactly one of $\lift P_1$ and $\lift P_2$ begin with $\lift E$. As noted earlier, if $\lift P_1$ begins with $\lift E$ then $\gamma_1$ is trivial, and if $\lift P_2$ begins with $\lift E$ then $\gamma_2$ is trivial.

\end{rem}

\section{The vertices form one periodic orbit}

In \cite{B} the rotation matrix was defined. This matrix was defined in terms of homology. If instead of defining the rotation elements in terms of homotopy we use homology, then by Lemma \ref{orbitindep} there will be no distinction between the rotation elements of different vertices that belong to the same orbit. In \cite{B} it was shown that if the vertices of the graph form one periodic orbit then the rotation matrix is the zero matrix. This means that using the rotation matrix, all the vertices have the same rotation element and no periodic points with different rotation elements can be detected. The following theorem shows how defining rotation elements using homotopy give much more information. In particular, if the vertices form one periodic orbit it is the case that the map can have infinitely many periodic orbits with different rotation elements.

First we give an example of a graph and a linear map with the properties that the vertices form one periodic orbit and the map has infinitely many periodic points with distinct rotation elements. We take Example \ref{notcont}, but instead of defining $V_1$ goes to $V_2$ along $E_2$ we define the path to be $E_2E_3E_1E_2$. Everything else is defined as in Example \ref{notcont}. The rotation element of $V_2$ is now $(ba^2)^{1/5}$ and the rotation element of $V_1$ is $(a^2b)^{1/5}$. Since $E_2$ belongs to $a^2b$ and $\lift P_1^5$ begins with $\lift E_2$, $\lift P_2^5$ does not begin with $\lift E_2$, then Theorem \ref{belongsnotpowers} shows that there must be infinitely many periodic points with different rotation elements in $E_2$.

\begin{lem}
Suppose that the fundamental group of $G$ has at least two generators. Let $f:G \to G$ be a map that is homotopic to the identity and has the property that the vertices form one periodic orbit. Then there is an edge that has terminal vertices with distinct rotation elements.
\end{lem}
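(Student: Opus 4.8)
The plan is to argue by contradiction. Suppose that no edge of $G$ has endpoints with distinct rotation elements; then for every edge the two endpoint vertices carry the same rotation element, and since $G$ is connected an induction along edges shows that \emph{all} vertices share one common rotation element $\rho=w^{1/N}$, where $N$ is the size of the single vertex orbit (equivalently the common period of the vertices, and the number of vertices of $G$). Thus the lemma reduces to the claim that this situation forces $\mathrm{rank}\,\pi\le 1$, where $\pi=\pi_1(G)$.

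First I would unwind the hypothesis on the universal cover. Choose a lift $\lift f$ with $\lift f_\#=\mathrm{id}$ (possible because $f\simeq\mathrm{id}$), and index the vertices so that $f(V_i)=V_{i+1}$, indices mod $N$. Then $\lift f(\lift V_i)=c_i\lift V_{i+1}$ for unique $c_i\in\pi$, and iterating, using $\lift f(\gamma\lift x)=\gamma\lift f(\lift x)$, gives $\lift f^{\,N}(\lift V_i)=w_i\lift V_i$, where the $w_i$ are cyclic conjugates of one another assembled from $c_0,\dots,c_{N-1}$ and each $w_{i+1}$ is a conjugate of $w_i$ by $c_i$ (this is the computation behind Lemma~\ref{orbitindep}). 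The assumption $\rho(V_i)=w^{1/N}$ for all $i$, together with uniqueness of $N$th roots in the free $\Q$-group, forces $w_0=\dots=w_{N-1}=w$; hence $w$ commutes with every $c_i$, so $w$ lies in the center of $H=\langle c_0,\dots,c_{N-1}\rangle\le\pi$.

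The remaining work is group-theoretic, and this is where freeness of $\pi$ enters. If $w\ne 1$, the centralizer of $w$ in the free group $\pi$ is infinite cyclic, generated by the unique maximal root $z$ of $w$; so every $c_i$, hence all of $H$, lies in $\langle z\rangle$. I would then bring in the edges outside the chosen spanning tree, whose twists $\theta_1,\dots,\theta_r$ are precisely the generators of $\pi$: from the coherent labeling one computes that $\lift f$ carries $E^{(s)}$ to a path whose endpoints differ from those of a translate of $E^{(s)}$ only by conjugations involving the $c_i$'s and $\theta_s$, and, applying $\lift f^{\,N}$ and using $\lift f^{\,N}_\#=\mathrm{id}$, one can track exactly which translates of $E^{(s)}$ (equivalently, which signed occurrences of the generator $\theta_s$) appear in $f^{\,N}(E^{(s)})$. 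Since $f^{\,N}_\#$ is the identity, these net occurrences are pinned down, and combining this bookkeeping across all $s$ with ``every $c_i\in\langle z\rangle$'' should force each $\theta_s$ to be conjugate into $\langle z\rangle$ as well, which is impossible when $r\ge 2$. The degenerate case $w=1$ (all rotation elements trivial, so $\lift f^{\,N}$ fixes every $\lift V_i$) must be handled on its own, again by reading off the lift of $f^{\,N}(E^{(s)})$ for each $s$ to show that $f^{\,N}$ acts on $\pi$ in a way only possible when $r\le 1$.

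The step I expect to be the principal obstacle is precisely this last passage: converting ``the $c_i$ all lie in one cyclic subgroup of $\pi$'' (or, in the trivial case, ``$\lift f^{\,N}$ fixes the lifted vertex orbit'') into an actual contradiction with $r\ge 2$. The difficulty is structural: the $c_i$ record only where the \emph{vertices} go, whereas the rank-$\ge 2$ hypothesis lives entirely in the edges, so the argument must marry the vertex data with a careful, coherent-labeling-based analysis of how $\lift f$ moves the lifted edges outside the spanning tree, using $\lift f_\#=\mathrm{id}$ throughout. I anticipate the cleanest route is to show, under the contrary hypothesis, that $\lift f^{\,N}$ is $\pi$-equivariantly homotopic rel the lifted vertex set to a single deck transformation lying in $\langle z\rangle$, which then determines $f^{\,N}_\#$ on each generator and collapses the rank.
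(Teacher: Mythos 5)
Your reduction works up to a point: fixing a coherent labeling, assuming every edge has endpoints with equal rotation elements, propagating along the spanning tree to a single common element $w^{1/N}$ (this propagation needs a little care for edges outside the tree, but that is minor), and using uniqueness of roots plus the orbit computation to conclude that every $c_i$ centralizes $w$ and hence, when $w\neq 1$, lies in the cyclic centralizer $\langle z\rangle$. But the argument stops exactly where the real content lies, and you flag this yourself. The passage from ``all the $c_i$ lie in $\langle z\rangle$'' (or, when $w=1$, ``$\lift f^{\,N}$ fixes every labelled vertex'') to a contradiction with $\operatorname{rank}\pi\ge 2$ is not a bookkeeping step, and the mechanism you propose cannot supply it: $f_\#=\mathrm{id}$ is automatic from $f\simeq\mathrm{id}$ and carries no information beyond what you have already used, and the homotopy only tells you that $f(E)$ is homotopic rel endpoints to $Q_i^{-1}EQ_j$, which imposes no relation at all between the generators $\theta_s$ and $\langle z\rangle$. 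Nothing in your vertex-level data sees the global topology of $G$; what actually rules out the configuration you are trying to exclude is a fixed-point-theoretic count, which your sketch never invokes.

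The paper's proof is direct and short precisely because it uses that input. Since $f\simeq\mathrm{id}$ and $\pi$ has rank at least two, the Lefschetz number equals $\chi(G)=1-\operatorname{rank}\pi\neq 0$, so by the Lefschetz fixed point theorem together with the trace theorem of \cite{B4} the linearization has a fixed point at which it has positive slope; this point lies in the interior of some edge $E$ because the vertices form a single orbit and hence are not fixed. Choosing the spanning tree to contain $E$, the positive slope forces neither $\lift P_1$ nor $\lift P_2$ to begin with $\lift E$, so the contracted path from $w_1\lift V_1$ to $w_2\lift V_2$ still contains $\lift E$, which is impossible if $w_1=w_2$. To salvage your contradiction argument you would need to import some version of this Euler-characteristic/index information (and treat the $w=1$ case, which you only gesture at); as written, the final step is a genuine gap rather than a technicality.
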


\begin{proof}
Since the map is homotopic to the identity and the fundamental group has at least two elements it follows from the Lefschetz Fixed Point Theorem that one of the edges under the linearization must contain a fixed point, and moreover the linearization has positive slope at the fixed point (see \cite{B4}). This fixed point will be on the interior of some edge, since the vertices cannot be fixed. Let $E$ denote this edge that contains a fixed point under the linearized map, and $V_1$ and $V_2$ the initial and terminal vertices. Choose a spanning tree containing $E$, and use this tree to give a coherent labeling of the universal cover. Let $n$ be the period of $V_1$ and $V_2$, and so there are words $w_1$ and $w_2$ with $\rho(V_1)=w_1^{1/n}$ and $\rho(V_2)=w_2^{1/n}$. We will show that $w_1 \neq w_2$.

Since the linearization has positive slope at the fixed point, neither of the paths $\lift P_1$ or $\lift P_2$ can begin with $\lift E$. So the contracted path from $w_1 \lift V_1$ to $w_2 \lift V_2$ must contain $\lift E$. This implies that $w_1 \neq w_2$, because if $w_1 = w_2$ the contracted path from $w_1 \lift V_1$ to $w_2 \lift V_2$ would just contain the single edge $w_1 \lift E$.
\end{proof}

\begin{thm} Suppose that the fundamental group of $G$ has at least two generators. Let $f:G \to G$ be a map that is homotopic to the identity and has the property that the vertices form one periodic orbit. Let $w^{1/v}$ denote the rotation element of one of the vertices. If every edge is belongs to $w$, then $f$ will have infinitely many periodic orbits with different rotation elements.
\end{thm}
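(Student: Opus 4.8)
The plan is to pin down a single edge $E$ to which both endpoint rotation elements ``belong,'' apply Theorem~\ref{belongsnotpowers} to that edge, and then separate the resulting periodic points into infinitely many distinct orbits using a length invariant on rotation elements.

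First I would apply the preceding lemma to get an edge $E$ with initial and terminal vertices $V_1,V_2$ whose rotation elements are distinct; since the vertices form one orbit their common period is $v$, so $\rho(V_1)=w_1^{1/v}$ and $\rho(V_2)=w_2^{1/v}$ with $w_1\neq w_2$. By Lemma~\ref{orbitindep}, the vertices forming a single orbit forces $w_1$, $w_2$, and $w$ to lie in one conjugacy class; in particular $w_1\sim w_2$, and since $w_1\neq w_2$ we get $w_1\neq 1$. Then I would check the hypotheses of Theorem~\ref{belongsnotpowers}. The relation ``$E$ belongs to $\alpha$'' depends only on the conjugacy class of $\alpha$, since a cyclically reduced representative of a conjugacy class is unique up to cyclic permutation and a cyclic permutation does not change which edges are traversed; hence $E$ belonging to $w$ gives that $E$ belongs to both $w_1$ and $w_2$. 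Also $w_1,w_2$ cannot be powers of a common word: if $w_1=u^a$, $w_2=u^b$, then conjugacy gives $|a|\,\|u\|=|b|\,\|u\|$ (cyclically reduced length being a conjugacy invariant), so $|a|=|b|$; the case $a=b$ is excluded by $w_1\neq w_2$, leaving $w_2=w_1^{-1}$, which is impossible since no nontrivial element of a free group is conjugate to its inverse. (Alternatively, since the proof of the preceding lemma shows that neither $\lift P_1$ nor $\lift P_2$ begins with $\lift E$, one may instead invoke Theorem~\ref{EbothWEneitherP} and skip the ``common power'' check.)

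With the hypotheses in place, Theorem~\ref{belongsnotpowers} yields, for every rational $r$ with $0<r<1/v$, a periodic point $x_r\in E$ with $\rho(x_r)=(w_1^{p}\gamma_1)^{1/q}$ for some $p,q$ with $p/q=r$, where $\gamma_1$ is a fixed initial subword of $w_1$; moreover, inspecting the proof of that theorem, the denominator $q$ may be taken arbitrarily large for each fixed $r$. To conclude, I would use the length $\|u^{1/q}\|:=\|u\|/q$ of a rotation element, where $\|u\|$ is the cyclically reduced word length of $u\in\pi$. This is well defined modulo the rule $(u^k)^{r}=u^{kr}$ and is invariant under conjugacy, so each periodic orbit has a well-defined rotation-element length. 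Writing $\ell=\|w_1\|\geq 1$, a bounded-cancellation estimate gives $\bigl|\,\|w_1^{p}\gamma_1\|-p\ell\,\bigr|\leq C$ for a constant $C$ depending only on $w_1$ and $\gamma_1$, so $\|\rho(x_r)\|=r\ell+O(C/q)$. Fixing a strictly increasing sequence $r_1<r_2<\cdots$ of rationals in $(0,1/v)$, and for each $i$ choosing $q$ large enough that $\|\rho(x_{r_i})\|$ is closer to $r_i\ell$ than to any of $r_1\ell,\dots,r_{i-1}\ell$, we obtain pairwise distinct lengths $\|\rho(x_{r_i})\|$. Hence the $x_{r_i}$ have pairwise non-conjugate rotation elements, so they lie in pairwise distinct periodic orbits with pairwise distinct rotation elements.

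I expect the last step to be the main obstacle. Theorem~\ref{belongsnotpowers} only asserts existence of \emph{some} $p,q$ realizing each $r$, and distinct values of $r$ need not by themselves produce distinct rotation elements: the stray factor $\gamma_1$ obstructs naive comparison of the base words, and abelianizing is useless here because (as recorded in the final section) the rotation elements of all the vertices vanish in homology when the vertices form one orbit. What rescues the argument is that the construction inside the proof of Theorem~\ref{belongsnotpowers} leaves the denominator free to be enlarged, which lets the rotation-element length be forced as close as desired to $r\ell$; once that is available, any infinite family of distinct rationals $r$ separates cleanly into distinct orbits.
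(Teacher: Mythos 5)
Your proof follows essentially the same route as the paper's: invoke the preceding lemma to produce the edge $E$ with vertices of distinct rotation elements and with neither $\lift P_1$ nor $\lift P_2$ beginning with $\lift E$, then apply Theorem~\ref{EbothWEneitherP}. The paper's own proof is quite terse here, simply asserting that the conclusion of Theorem~\ref{EbothWEneitherP} ``gives infinitely many periodic points with distinct rotation elements,'' and you correctly identify and fill two points that it leaves implicit. First, you spell out why $E$ belongs to both $w_1$ and $w_2$: all vertices lie in one orbit, so by Lemma~\ref{orbitindep} the words $w_1,w_2,w$ are conjugate, and belonging is a conjugacy invariant because cyclic reductions of conjugate words are cyclic permutations of one another. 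Second, you supply the separation into genuinely distinct orbits via a cyclically reduced length invariant on rotation elements; the paper does not argue this at all. Your detour through Theorem~\ref{belongsnotpowers} and the ``not powers of a common word'' check is a valid alternative but unnecessary, as you yourself note; the paper goes directly to Theorem~\ref{EbothWEneitherP}.

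One simplification to your final step: the bounded-cancellation estimate and the trick of enlarging $q$ are more than you need. Since $\gamma_1$ is an initial subword of $w_1$, write $w_1=\gamma_1 u$; then $w_1^p\gamma_1=\gamma_1(u\gamma_1)^p$ is conjugate to $(u\gamma_1)^p$, and $u\gamma_1=\gamma_1^{-1}w_1\gamma_1$ is conjugate to $w_1$, so the cyclically reduced length of $w_1^p\gamma_1$ is exactly $p\,\|w_1\|_{\mathrm{cyc}}$. Hence $\|\rho(x_r)\|=r\,\|w_1\|_{\mathrm{cyc}}$ exactly, and distinct rationals $r$ produce nonconjugate rotation elements (and hence distinct orbits, by Lemma~\ref{orbitindep}) with no further choices required.
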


\begin{proof}
The previous lemma shows that there must be an edge $E$ that has vertices with distinct rotation elements. Use this edge to form a spanning tree and then to coherently label the universal cover. 
When the construction in the lemma is followed, neither $\lift P_1$ nor $\lift P_2$ begins with $\lift E$. Thus the hypothesis of Theorem \ref{EbothWEneitherP} is satisfied, and its conclusion gives infinitely many periodic points with distinct rotation elements.
\end{proof}

\thebibliography{1}
\bibitem{ALM}
L.~Alseda, J.~Llibre, M.~Misiurewicz.
\newblock Combinatorial dynamics and Entropy in Dimension One.
\newblock {\em Advanced Series in Nonlinear Dynamics}, {\bf 5}, World Scientific, 2000.

\bibitem{baum65}
G. Baumslag
\newblock On free $\mathcal D$-groups.
\newblock {\em  Comm. Pure Appl. Math.}, {\bf 18},  25Ð30, 1965

\bibitem{B} 
C. Bernhardt
\newblock Rotation matrices for vertex maps on graphs
\newblock {\em J. Diff. Equations Appl.} Vol. 18, 6, 1033-1041, 2012

\bibitem{B2}
C. Bernhardt, 
\newblock  Vertex maps for trees: algebra and periods of periodic
orbits. 
\newblock {\em Disc. Cts. Dyn. Sys.} {\bf 14}(3), 399--408 (2006)

\bibitem{B3}
C. Bernhardt, 
\newblock  A Sharkovsky theorem for vertex maps on trees. 
\newblock {\em J.Diff. Equations Appl.} {\bf 17}(1), 103--113 (2011)

\bibitem{B4}
C. Bernhardt,
\newblock  Vertex maps on graphs -- trace theorems, 
\newblock {\em Fixed point theory and applications}, 2011, {\bf 2011}:8

\bibitem{BM}
D. Bernadete, J. Mitchel
\newblock Asymptotic homotopy cycles for flows and $\Pi_1$ de Rham theory
\newblock {\em Trans. Amer. Math. Soc.} 338, 495-535. 1993

\bibitem{F} 
J. Franks
\newblock Realizing rotation vectors for torus homeomorphisms
\newblock {\em Trans. Amer. Math. Soc.} 311, 107-115. 1989

\bibitem{fps04}
M.~Furi, M.~P. Pera, and M.~Spadini.
\newblock On the uniqueness of the fixed point index on differentiable
  manifolds.
\newblock {\em Fixed Point Theory and Applications}, 4:251--259, 2004.

\bibitem{jian83}
B.~Jiang.
\newblock {\em Lectures on {N}ielsen fixed point theory}.
\newblock Contemporary Mathematics 14, American Mathematical Society, 1983.

\bibitem{L} 
J. Llibre, R. S. MacKay 
\newblock Rotation vectors and entropy for homeomorphisms of the torus isotopic to the identity
\newblock {\em Ergod. Th. \& Dynam. Sys.} 11, 115-128, 1991

\bibitem{M} 
M. Misiurewicz, K. Ziemian 
\newblock Rotation sets for maps of tori
\newblock {\em J. London Math. Soc.} 40, 490-506, 1989 

\bibitem{NPT}
S. Newhouse, J. Palis, F. Takens
\newblock Bifurcations and stability of families of diffeomorphisms
\newblock {\em Inst. Hautes \'Etudes Sci. Publ. Math}, 57, 5-71, 1983

\bibitem{P}
H. Poincar\'e
\newblock Sur les courbes d\'efinies par les \'equations diff\'erentielles
\newblock{ \em Oevres completes} Vol. 1, 137-158, Gauthier-Villars, Paris, 1952

\end{document}